\newtheorem{theorem}{Theorem}[section]
\newtheorem{corollary}[theorem]{Corollary}
\newtheorem{proposition}[theorem]{Proposition}
\newtheorem{lemma}[theorem]{Lemma}
\newtheorem{definition}[theorem]{Definition}
\def\irr#1{{\rm Irr}(#1)}
\def\irrr#1#2 {\irr {#1 \mid #2}}
\newcommand{\R}{\mathbb R}
\newcommand{\sfe}{{{\mathbb S}^{n-1}}}
\newcommand{\E}{\mathbb E}
\newcommand{\B}{\mathcal{B}}
\def \RR {\mathbb R}
\def \ZZ {\mathbb Z}
\def \cF {\mathcal F}
\begin{document}

\title[Koldobsky's slicing inequality]{The lower bound for Koldobsky's slicing inequality via random rounding}
\author[Bo'az Klartag, Galyna V. Livshyts]{Bo'az Klartag, Galyna V. Livshyts}

%\address{School of Mathematics, Georgia Institute of Technology} \email{glivshyts6@math.gatech.edu}

\subjclass[2010]{Primary: 52}
\keywords{Convex bodies, log-concave}
\date{\today}
\begin{abstract}
We study the lower bound for Koldobsky's slicing inequality. We show that there exists a measure $\mu$ and a symmetric convex body $K \subseteq \RR^n$, such that for all $\xi\in\sfe$ and all $t\in\R,$
$$\mu^+(K\cap(\xi^{\perp}+t\xi))\leq \frac{c}{\sqrt{n}}\mu(K)|K|^{-\frac{1}{n}}.$$
Our bound is optimal, up to the value of the universal constant. It  improves slightly upon the results of the first named author and Koldobsky \cite{KlKol}, which included a doubly-logarithmic error.
The proof is based on an efficient way of discretizing the unit sphere.
%Some further applications of it include estimates on the singular values of random matrices; in the last section, we outline some of the ideas in this regard, and a detailed discussion of the matter shall appear in our separate manuscript \cite{KLsingval}.
\end{abstract}
\maketitle
%\tableofcontents

\section{Introduction}

We shall work in the Euclidean $n$-dimensional space $\R^n$. The unit ball shall be denoted by $B_2^n$ and the unit sphere by $\sfe$. The Lebesgue volume of a measurable set $A\subset \R^n$ is denoted by $|A|$. Throughout the paper, $c$, $C$, $C'$ etc stand for positive absolute constants whose value may change from line to line.

\medskip Given a measure $\mu$ with a continuous density $f$ on $\R^n$ and a
set $A \subseteq \RR^n$ of Hausdorff dimension $n-1$, we write
$$\mu^+(A)=\int_A f(x)dx,$$
where the integration is with respect to the $(n-1)$-dimensional Hausdorff measure.

\medskip
For a measure $\mu$ on $\R^n$ with a continuous density and for an origin symmetric convex body $K$ in $\R^n$ (i.e., $K = -K$), define the quantity
$$S_{\mu,K}=\inf_{\xi\in\sfe} \frac{\mu(K)}{|K|^{\frac{1}{n}}\mu^+(K\cap \xi^{\perp})},$$
where $\xi^{\perp} = \{ x \in \RR^n \, , \, \langle x, \xi  \rangle = 0 \}$ is the hyperplane orthogonal to $\xi$. We let
$$S_n=\sup_{\mu} \sup_{K\subset \R^n} S_{\mu, K}, $$
where the suprema run over measures $\mu$ with a continuous density $f$ in $\RR^n$ and all origin-symmetric convex bodies $K \subseteq \RR^n$

\medskip
Koldobsky in a series of papers \cite{Kol2}, \cite{Kol3}, \cite{Kol4} investigated the question \emph{how large can $S_n$ be?} The discrete version of this question was studied by Alexander, Henk, Zvavitch \cite{AHZ} and Regev \cite{regev}. In \cite{Kol2}, where the question has first arisen, Koldobsky gave upper and lower bounds on $S(\mu,K),$ that are independent of the dimension in the case when $K$ is an intersection body. In \cite{Kol3}, he established the general bound $S_n\leq \sqrt{n}$. In \cite{Kol4}, he has shown that $S_{\mu, K}$ is bounded from above by an absolute constant in the case when $K$ is an unconditional convex body (invariant under coordinate reflections). Further, Koldobsky and Pajor \cite{KolPaj} have shown that $S_{\mu,K}\leq C\sqrt{p}$ when $K$ is the unit ball of an $n$-dimensional section of $L_p.$

\medskip In the case when $\mu$ is the Lebesgue measure, it was conjectured by Bourgain \cite{Bou1}, \cite{Bou2} that $S_{\mu, K}\leq C,$ for an arbitrary origin-symmetric convex body $K$. The best currently known bound in this case is $S_{\mu, K}\leq Cn^{\frac{1}{4}},$ established by the first named author \cite{Kl_slicing}, slightly improving upon Bourgain's estimate from \cite{Bou3}. However, it was shown by the first named author and Koldobsky \cite{KlKol} that $S_n\geq \frac{c\sqrt{n}}{\sqrt{\log\log n}}$. Moreover, it was shown there that for every $n$ there exists a measure $\mu$ with continuous density and a symmetric convex body $K \subseteq \RR^n$ such that for all $\xi\in\sfe$ and for all $t\geq 0,$
\begin{equation}\label{KK1}
\mu^+(K\cap(\xi^{\perp}+t\xi))\leq C\frac{\sqrt{\log\log n}}{\sqrt{n}}\mu(K)|K|^{-\frac{1}{n}},
\end{equation}
where $C>0$ is some absolute constant.
Here $A + x = \{ y + x \, ; \, y \in A \}$ for a set $A \subseteq \RR^n$ and a vector $x \in \RR^m$. In this note we improve the bound (\ref{KK1}), and obtain:

\begin{theorem}\label{main}
For every $n$ there exists a measure $\mu$ and a convex symmetric body $L \subseteq \RR^n$ such that for all $\xi\in\sfe$ and for all $t\geq 0,$
\begin{equation}\label{KK}
\mu^+(L\cap(\xi^{\perp}+t\xi))\leq \frac{C}{\sqrt{n}}\mu(L)|L|^{-\frac{1}{n}},
\end{equation}
where $C>0$ is a universal constant.
\end{theorem}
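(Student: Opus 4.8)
The plan is to construct the measure $\mu$ and body $L$ by a random perturbation argument. Following the approach of Klartag--Koldobsky \cite{KlKol}, one expects the extremal example to come from a measure whose density is a small perturbation of a Gaussian (or of the uniform measure on a ball), supported on a body $L$ that is obtained from the Euclidean ball by carving out a collection of caps or by a randomized construction. The key quantitative mechanism is this: one wants $\mu(L)$ to be large while every hyperplane section $\mu^+(L \cap (\xi^\perp + t\xi))$ is small, and the ratio one can hope to gain over the trivial bound is governed by how well one can make the density "hide" from all hyperplanes simultaneously. Since a hyperplane is an $(n-1)$-dimensional object and the sphere of directions $\sfe$ is also essentially $(n-1)$-dimensional, the union bound over directions costs only a polynomial (indeed $\mathrm{poly}(n)$, absorbed by the $\sqrt n$) factor rather than the $\sqrt{\log\log n}$ that appeared from a cruder net argument.

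\medskip
The main steps I would carry out are as follows. First, reduce to a clean finite-dimensional optimization: fix $L = R \cdot B_2^n$ for a suitable radius $R$ comparable to $\sqrt n$ (so that $|L|^{1/n} \asymp 1$ up to constants, by Stirling), and look for a density $f$ of the form $f = c_n \cdot e^{-|x|^2/2}\cdot(1 + g(x))$ or a genuinely random density built from independent contributions attached to points of an efficient net on the sphere. Second — and this is the heart of the matter — replace the deterministic $\e$-net of cardinality $e^{O(n)}$ (whose union bound is wasteful) by a \emph{random rounding} scheme: discretize the sphere so that each direction $\xi$ is "represented" by a random nearby lattice-type point, in such a way that the number of distinct representatives that genuinely need to be controlled is only polynomial in $n$, or so that the bad events for different directions are strongly correlated and hence the union bound is cheap. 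Third, for a fixed direction $\xi$, estimate $\mathbb{E}\,\mu^+(L \cap (\xi^\perp + t\xi))$ and its fluctuations (via a concentration inequality — Bernstein or bounded differences on the independent building blocks), obtaining a bound of order $\frac{C}{\sqrt n}\mu(L)|L|^{-1/n}$ with failure probability beating the (now reduced) cardinality of the discretization. Fourth, take a union bound over the discretization and over a net in the parameter $t \in [0,\infty)$ (here one uses that the sections are Lipschitz in $t$ and decay for large $t$, so finitely many values of $t$ suffice), and finally transfer the estimate from net points back to all $(\xi,t)$ by a continuity/Lipschitz argument, at the cost of another universal constant.

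\medskip
The main obstacle I anticipate is precisely the second step: arranging the discretization of $\sfe$ so that the union bound over directions is affordable. A naive $\e$-net has exponentially many points, and a single concentration estimate for one direction will not have small enough failure probability to survive that union bound while still delivering the sharp $1/\sqrt n$; this is exactly what forced the $\sqrt{\log\log n}$ loss in \cite{KlKol}, where the net was presumably handled by a two-scale chaining argument. The random rounding should be designed so that, with high probability, the constructed object is simultaneously good for \emph{all} net points at once — e.g. by making the perturbation piecewise constant on cells of a random partition of the sphere whose number of cells is polynomial in $n$, and arguing that controlling the section in each cell controls it for every direction in that cell. A secondary, more technical obstacle is verifying that $L$ (if it is not simply a ball) remains convex and symmetric after the random perturbation, and that $f$ remains a bona fide nonnegative continuous density; if $L$ is kept to be exactly a Euclidean ball and only $f$ is perturbed, this difficulty disappears and one only needs $1 + g \ge 0$, which is easy to enforce by taking the perturbation amplitude small and the number of independent pieces large.
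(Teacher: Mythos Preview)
You have correctly identified that the new ingredient is a \emph{random rounding} of directions to a lattice-type net, but you have misdiagnosed where it enters and what it fixes, and your proposed construction of $(\mu,L)$ is not the one that actually works.

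First, the role of random rounding. In the paper the net $\mathcal F_\rho\subset(\rho/\sqrt n)\mathbb Z^n$ still has exponential cardinality $(C/\rho)^n$, and the union bound over it is taken in the completely standard way: Hoeffding gives failure probability $e^{-c\beta^2 N}$ for a single net point, and with $\beta\asymp\sqrt{(n/N)\log(1/\rho)}$ this beats $(C/\rho)^n$. The union-bound step was never the bottleneck, and random rounding does \emph{not} reduce the net to polynomial size. What random rounding replaces is the step you put last --- the ``continuity/Lipschitz'' transfer from net points back to an arbitrary $\xi\in\mathbb S^{n-1}$. The function $\xi\mapsto\varphi(r\langle\xi,\theta\rangle+t)$ has Lipschitz constant of order $r$ (and $r$ can be as large as $n$), so a naive Lipschitz passage forces $\rho$ to be extremely small and the accounting collapses. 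Instead, one observes that for each $\xi$ the random rounding $\eta^\xi$ satisfies $\mathbb E\,\eta^\xi=\xi$ and $\eta^\xi-\xi$ has independent bounded coordinates; a subgaussian lower bound (Lemma~\ref{lowerbound} in the paper) then gives
\[
F(\xi,t)\;\le\;\mathbb E_{\eta}\,F(\eta^\xi,t)+\frac{C}{M}\;\le\;\max_{\eta\in\mathcal F_\rho}F(\eta,t)+\frac{C}{M},
\]
with $M\asymp\sqrt n/(r\rho)$. This is the inequality that costs nothing and replaces your Lipschitz step; it is the actual content of the improvement over \cite{KlKol}.

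Second, the construction. The body is not a Euclidean ball with a perturbed Gaussian density. Following \cite{KlKol}, one takes $m\approx\log^* n$ scales $R_k=n/\log^{(k)}n$ and $N_k=n(\log^{(k-1)}n)^5$, picks independent uniform $\theta_{kj}\in\mathbb S^{n-1}$, sets $K=\mathrm{conv}\{\pm R_k\theta_{kj},\,\pm ne_i\}$ and $L=4K$, and takes $\mu$ to be the Gaussian convolved with the symmetrized convolution of the empirical measures $\frac{1}{N_k}\sum_j\delta_{R_k\theta_{kj}}$. The hyperplane density $\mu^+(\xi^\perp+t\xi)$ is then an iterated average of $\varphi$'s, and one applies the key estimate (Proposition~\ref{key}) \emph{iteratively}, once at each scale $k=1,\dots,m$, each application converting the innermost average into a term of size $\sqrt n/R_k$ plus a fluctuation of size $\sqrt{(n/N_k)\log(\cdot)}$. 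The multi-scale telescoping is what produces the final $C/\sqrt n$; a single-scale construction with a ball and a mild density perturbation will not deliver it. Your sketch omits this iterated structure entirely, and without it the argument does not close.
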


In \cite{BKK}, the first named author, Bobkov and Koldobsky explored the connections of (\ref{KK1}) and the maximal ``distance'' of convex bodies to subspaces of $L_p$.
Write $L_p^n$ for the collection
of origin-symmetric convex bodies in $\RR^n$ that are linear images of unit balls of $n$-dimensional subspaces of the Banach space $L_p$.
The \emph{outer volume ratio} of a symmetric convex body $K$ in $\R^n$ to the subspaces of $L_p$ is defined as
$$d_{ovr}(K, L_p^n):=\inf_{D \in L_p^n:\,K\subset D} \left(\frac{|D|}{|K|}\right)^{\frac{1}{n}}.$$
John's theorem, and the fact that $l_2^n$ embeds in $L_p$, entails that $d_{ovr}(K, L_p^n)\leq \sqrt{n}$, for any symmetric convex body $K$. Combined with the consideration from \cite{BKK}, Theorem \ref{main} implies a doubly-logarithmic improvement of a result of \cite{BKK}:

\begin{corollary}
There exists an absolute constant $c>0$ and an origin-symmetric convex body $L$ in $\R^n$ such that for any $p \geq 1$,
$$d_{ovr}(L, L_p^n)\geq c\frac{\sqrt{n}}{\sqrt{p}}.$$
\end{corollary}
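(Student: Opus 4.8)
The plan is to feed Theorem \ref{main} into the machinery relating the slicing constant $S_{\mu,K}$ to the outer volume ratio developed in \cite{BKK}. First I would apply Theorem \ref{main} and specialize \eqref{KK} to $t=0$: this produces a measure $\mu$ with continuous density and an origin-symmetric convex body $L\subseteq\RR^n$ with
$$\mu^+(L\cap\xi^\perp)\leq \frac{C}{\sqrt n}\,\mu(L)\,|L|^{-\frac1n}\qquad\text{for every }\xi\in\sfe.$$
Taking the infimum over $\xi$ of the reciprocal quantity, this says precisely that $S_{\mu,L}\geq c_0\sqrt n$ for an absolute constant $c_0>0$.

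Next I would record the elementary comparison, essentially due to \cite{BKK}, that for every measure $\nu$ with continuous density, every origin-symmetric convex body $K\subseteq\RR^n$, and every $p\geq1$ one has $S_{\nu,K}\leq C\sqrt p\;d_{ovr}(K,L_p^n)$. To see this, fix $D\in L_p^n$ with $K\subseteq D$ and consider the measure $\nu_K$ whose density is that of $\nu$ multiplied by $\1_K$. Since $K\subseteq D$ we have $\nu_K(D)=\nu(K)$, while $\nu_K^+(D\cap\xi^\perp)=\nu^+(K\cap\xi^\perp)$ for every $\xi$. The Koldobsky--Pajor inequality \cite{KolPaj} applied to $D$ (which is $S_{\nu_K,D}\leq C\sqrt p$) yields some $\xi\in\sfe$ with $\nu_K(D)\leq C\sqrt p\,|D|^{1/n}\nu_K^+(D\cap\xi^\perp)$; substituting the two identities above and dividing by $|K|^{1/n}$ gives $S_{\nu,K}\leq C\sqrt p\,(|D|/|K|)^{1/n}$, and taking the infimum over admissible $D$ produces the claim. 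The density of $\nu_K$ is not continuous, so one either invokes the form of \cite{KolPaj} valid for general finite Borel measures, or approximates $\1_K$ from inside by continuous functions, exactly as in \cite{BKK}; this is the only point that requires a bit of care.

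Finally I would combine the two estimates with $\nu=\mu$ and $K=L$: for every $p\geq1$,
$$c_0\sqrt n\;\leq\; S_{\mu,L}\;\leq\; C\sqrt p\;d_{ovr}(L,L_p^n),$$
whence $d_{ovr}(L,L_p^n)\geq (c_0/C)\,\sqrt n/\sqrt p$, with a constant $c=c_0/C$ that does not depend on $n$ or $p$. The substantive content is entirely in Theorem \ref{main}; the deduction of the Corollary is short, and the main (minor) obstacle is the passage from the continuous density of $\mu$ to the discontinuous density $\mu\cdot\1_L$ when applying \cite{KolPaj}, which is handled by the standard approximation argument.
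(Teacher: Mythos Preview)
Your proposal is correct and is exactly the approach the paper takes: the paper does not give a detailed proof of the Corollary but simply says that Theorem~\ref{main} combined with the considerations of \cite{BKK} (namely the inequality $S_{\nu,K}\leq C\sqrt{p}\,d_{ovr}(K,L_p^n)$ obtained via \cite{KolPaj}) yields the result. Your write-up spells out precisely this deduction, including the standard approximation needed to pass from $\mu$ to $\mu\cdot\1_L$ when invoking Koldobsky--Pajor.
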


The construction of $\mu$ and $K$ is randomized, and follows the idea from \cite{KlKol}. The question boils down to estimating the supremum of a certain random function. The method of the proof is based on an efficient way of discretizing the unit sphere. We consider, for every point in $\sfe$, a ``rounding'' to a point in a scaled integer lattice, chosen at random, see Raghavan and Thompson \cite{RagTho}. This construction was recently used in \cite{KA} for efficiently computing sketches of high-dimensional data. It is somewhat reminiscent of the method used in discrepancy theory, called jittered sampling. For instance, using this method, Beck \cite{beck} has obtained strong bounds for the $L_2$-discrepancy.

\medskip
In Section 2 we  describe the net construction. In Section 3 we derive the key estimate for our random function. In Section 4 we conclude the proof of Theorem \ref{main}. In Section 5 we briefly outline some further applications, in particular in relation to random matrices; this discussion in detail shall appear in a separate paper.

\medskip
We use the notation $\log^{(k)} (\cdot)$ for the logarithm iterated $k$ times, and $\log^* n$ for the smallest positive integer $m$ such that $\log^{(m)} n\leq1$.
Denote $\| x \|_p = \left( \sum_i |x_i|^p \right)^{1/p}$ for $x \in \RR^n$, and also $\| x \|_{\infty} = \max_i |x_i|$ and  $|x| = \| x \|_2 = \sqrt{\langle x, x \rangle}$.
Write $B_p^n = \{ x \in \RR^n \, ; \, \| x \|_p \leq 1 \}$. We also write $A + B = \{ x + y \, ; \, x \in A, y \in B \}$ for the Minkowski sum.

\medskip
\textbf{Acknowledgements.}  The second named author is supported in part by the NSF CAREER DMS-1753260. The work was partially supported by the National Science Foundation under Grant No. DMS-1440140 while the authors were in residence at the Mathematical Sciences Research Institute in Berkeley, California, during the Fall 2017 semester. The authors are grateful to Alexander Koldobsky for fruitful discussions and helpful comments. The authors are thankful to the anonymous referee for valuable suggestions.

\section{The random rounding and the net construction}

We fix a dimension $n$ and a parameter  $\rho \in (0,1/2)$.
We define $\cF_{\rho}$
as
the set of all vectors of Euclidean norm between $1-2\rho$ and $1+\rho$ in which every
coordinate is an integer multiple of $\rho/\sqrt{n}$. That is,
%$$ \cF_{\rho} = 2 B_2^n \cap \frac{\rho}{\sqrt{n}} \ZZ^n. $$
$$ \cF_{\rho} = \left( (1+\rho)B_2^n\setminus(1-2\rho)B_2^n \right)\cap \frac{\rho}{\sqrt{n}} \ZZ^n. $$

\begin{lemma}\label{net}
The set $\cF_{\rho}$ satisfies  $\#\mathcal{F}_{\rho}\leq \left(\frac{C}{\rho}\right)^n$, where $C$ is a universal constant.
	Moreover, let $\xi \in \sfe$,
	and suppose that $\eta \in (\rho / \sqrt{n}) \ZZ^n$ satisfies $\| \xi - \eta \|_{\infty} \leq \rho/\sqrt{n}$. Then $\eta \in \cF_{\rho}$.
\end{lemma}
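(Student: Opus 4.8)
The plan is to handle the two assertions separately, since the second one is essentially immediate and the first is a standard volumetric estimate. For the cardinality bound, I would observe that $\cF_\rho$ consists of lattice points of the scaled lattice $(\rho/\sqrt n)\ZZ^n$ that lie inside the ball $(1+\rho)B_2^n$. Around each such lattice point place the axis-parallel cube of side $\rho/\sqrt n$ centered at it; these cubes have pairwise disjoint interiors and each has volume $(\rho/\sqrt n)^n$. Moreover, every such cube has diameter $\rho\sqrt n \cdot (1/\sqrt n) \cdot \sqrt n$… more carefully: a cube of side $s$ has circumradius $s\sqrt n/2$, so here the circumradius is $(\rho/\sqrt n)(\sqrt n/2) = \rho/2 \le 1$ (since $\rho<1/2$). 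Hence every such cube is contained in $(1+\rho+\tfrac{\rho}{2})B_2^n \subseteq 2B_2^n$. Comparing volumes,
$$
\#\cF_\rho \cdot \left(\frac{\rho}{\sqrt n}\right)^n \le |2B_2^n| = 2^n |B_2^n|,
$$
and using $|B_2^n| \le (C'/\sqrt n)^n$ for a universal constant $C'$, we get $\#\cF_\rho \le (C/\rho)^n$, as desired.

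For the second assertion, suppose $\xi \in \sfe$ and $\eta \in (\rho/\sqrt n)\ZZ^n$ with $\|\xi-\eta\|_\infty \le \rho/\sqrt n$. Then
$$
|\eta| = \|\eta\|_2 \le \|\xi\|_2 + \|\xi-\eta\|_2 \le 1 + \sqrt n \cdot \|\xi-\eta\|_\infty \le 1 + \sqrt n \cdot \frac{\rho}{\sqrt n} = 1+\rho,
$$
and similarly $|\eta| \ge \|\xi\|_2 - \|\xi-\eta\|_2 \ge 1 - \rho \ge 1 - 2\rho$. Thus $\eta \in \big((1+\rho)B_2^n \setminus (1-2\rho)B_2^n\big) \cap \tfrac{\rho}{\sqrt n}\ZZ^n = \cF_\rho$.

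I do not expect any genuine obstacle here; the only mild point of care is getting the constant in $|B_2^n|^{1/n} \asymp 1/\sqrt n$ right (via $|B_2^n| = \pi^{n/2}/\Gamma(n/2+1)$ and Stirling), and making sure the packing cubes stay inside a ball of radius a universal constant so the volume comparison is clean — both routine.
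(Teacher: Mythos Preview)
Your argument is correct. The ``Moreover'' part matches the paper's proof essentially verbatim (triangle inequality after passing from $\|\cdot\|_\infty$ to $\|\cdot\|_2$).

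For the cardinality bound, however, you take a genuinely different route. The paper does \emph{not} use a volumetric packing argument: instead it observes that any $x\in\cF_\rho$ satisfies $\|x\|_1\le\sqrt n\,|x|\le 2\sqrt n$, rescales to the integer lattice so that the points have nonnegative-integer-coordinate representatives with $\|\cdot\|_1\le 2n/\rho$, and then bounds the number of such lattice points by $\binom{R+n}{n}\le (e(R+n)/n)^n$ with $R=\lfloor 2n/\rho\rfloor$, picking up a factor $2^n$ for the signs. Your argument instead packs disjoint cubes of side $\rho/\sqrt n$ into $2B_2^n$ and invokes $|B_2^n|\le(C'/\sqrt n)^n$ via Stirling. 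Both are short and yield the same $(C/\rho)^n$ bound; the paper's approach avoids Stirling and the volume formula for $B_2^n$ at the cost of the (equally standard) binomial estimate, while yours is the textbook volumetric count and perhaps more immediately recognizable. Neither has any real advantage over the other here.
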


\begin{proof} Any $x \in \cF_{\rho}$ satisfies $\| x \|_1 \leq \sqrt{n} |x| \leq 2 \sqrt{n}$. Hence all vectors in the scaled set $(\sqrt{n} / \rho) \cdot \cF_{\rho}$ have integer coordinates whose absolute values sum to a number which is at most $2 n / \rho$. Recall that the number of vectors $x \in \RR^n$ with non-negative, integer coordinates and $\| x \|_1 \leq R$ equals
	$$ \left( \! \! \begin{array}{c} R + n \\ n \end{array} \! \! \right) \leq \left(e \frac{R+n}{n} \right)^n $$
	where $R$ is a non-negative integer. Consequently,
	$$ \# \cF_\rho \leq 2^n \cdot \left(e \frac{2 \rho^{-1} n +n}{n} \right)^n \leq \left( \frac{C}{\rho} \right)^n. $$
	We move on to the ``Moreover'' part. %Since $\rho < 1/2$
	We have $|\xi - \eta| \leq \sqrt{n} \| \xi - \eta \|_{\infty} \leq \rho$. Therefore $1-2\rho < |\eta| \leq \rho$
	and consequently $\eta \in  ((1+\rho)B_2^n \setminus (1-2\rho)B_2^n) \cap \frac{\rho}{\sqrt{n}} \ZZ^n = \cF_\rho$.%$\eta\in \cF_\rho$.
\end{proof}

\begin{definition}
	For $\xi \in \sfe$
	 consider a random vector $\eta^{\xi} \in (\rho / \sqrt{n}) \ZZ^n$ with independent coordinates such that $\| \xi - \eta^{\xi} \|_{\infty} \leq \rho/\sqrt{n}$
	 with probability one and $\E \eta^{\xi}=\xi$. Namely,
	 for $i=1,\ldots,n$, writing $\xi_i =
	 \frac{\rho}{\sqrt{n}} (k_i + p_i)$ for an integer $k_i$ and $p_i \in [0,1)$,
	\[
	\eta^{\xi}_i=
	\begin{cases}
	\frac{\rho}{\sqrt{n}} k_i ,& \text{with probability}\,\,\, 1-p_i \\
	\frac{\rho}{\sqrt{n}} (k_i+1), & \text{with probability}\,\,\, p_i.
	\end{cases}
	\]
\end{definition}

For any $\xi \in \sfe$, the random vector $\eta^{\xi}$ belongs to $\cF_\rho$ with probability one, according to Lemma \ref{net}. The random vector $\eta^{\xi} - \xi$ is a centered random vector with independent coordinates, all belonging to the interval $[-\rho/\sqrt{n}, \rho/\sqrt{n}]$.
We shall make use of Hoeffding's inequality for bounded random variables (see, e.g., Theorem 2.2.6 and Theorem 2.6.2 in Vershynin \cite{Versh}).

\begin{lemma}[Hoeffding's inequality]\label{hoeffding}
Let $X_1,...,X_n$ be independent random variables taking values in $[m_i,M_i]$, $i=1,...,n$. Then for any $\beta>0,$
$$P\left( \left|\sum_{i=1}^n X_i - \mathbb{E} X_i \right| \geq \beta\right)\leq 2e^{-\frac{c\beta^2}{\sum_{i=1}^n (M_i-m_i)^2}},$$
where $c>0$ is an absolute constant.
\end{lemma}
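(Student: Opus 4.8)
The plan is to prove Lemma \ref{hoeffding} by the classical exponential moment (Chernoff) method. First I would reduce to the centered case by setting $Y_i = X_i - \E X_i$, so that each $Y_i$ is a centered random variable taking values in an interval of length $M_i - m_i$. The core ingredient is Hoeffding's lemma: if $Y$ is centered and takes values in $[a,b]$, then $\E e^{\lambda Y} \leq \exp\!\left( \lambda^2 (b-a)^2 / 8 \right)$ for every $\lambda \in \RR$. To establish this I would set $\psi(\lambda) = \log \E e^{\lambda Y}$, observe that $\psi(0) = 0$ and $\psi'(0) = \E Y = 0$, and note that $\psi''(\lambda) = \mathrm{Var}_\lambda(Y)$, the variance of $Y$ under the tilted probability measure with density proportional to $e^{\lambda Y}$. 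Since $Y$ still lies in $[a,b]$ under this measure, $\mathrm{Var}_\lambda(Y) \leq (b-a)^2/4$, and a second-order Taylor expansion of $\psi$ around $0$ gives $\psi(\lambda) \leq \lambda^2 (b-a)^2 / 8$.

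Next, by independence, $\E \exp\!\left( \lambda \sum_{i=1}^n Y_i \right) = \prod_{i=1}^n \E e^{\lambda Y_i} \leq \exp\!\left( \frac{\lambda^2}{8} \sum_{i=1}^n (M_i - m_i)^2 \right)$ for all $\lambda \in \RR$. Applying Markov's inequality to $\exp\!\left( \lambda \sum_i Y_i \right)$ with $\lambda > 0$ yields, for any $\beta > 0$,
\[
P\!\left( \sum_{i=1}^n Y_i \geq \beta \right) \leq \exp\!\left( -\lambda \beta + \frac{\lambda^2}{8} \sum_{i=1}^n (M_i - m_i)^2 \right).
\]
Optimizing over $\lambda$ — taking $\lambda = 4\beta / \sum_{i=1}^n (M_i - m_i)^2$ — gives the bound $\exp\!\left( -2\beta^2 / \sum_{i=1}^n (M_i - m_i)^2 \right)$. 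Running the same argument with $-Y_i$ in place of $Y_i$ controls the lower tail, and a union bound over the two events produces the factor $2$, giving the claim with $c = 2$ (hence any $c \leq 2$).

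The main obstacle is Hoeffding's lemma itself, i.e. the sub-Gaussian bound on the moment generating function of a bounded centered variable; everything after that is a routine Chernoff argument. The tilted-measure/variance route sketched above is the cleanest, but one may instead use convexity of $t \mapsto e^{\lambda t}$ to write $e^{\lambda Y} \leq \frac{b - Y}{b - a} e^{\lambda a} + \frac{Y - a}{b - a} e^{\lambda b}$, take expectations using $\E Y = 0$, and then analyze the resulting explicit function of $\lambda$ through its logarithm and a second-order Taylor estimate. Since the statement is quoted verbatim from Vershynin's book, it would of course also be legitimate simply to invoke Theorems 2.2.6 and 2.6.2 of \cite{Versh}, as is done in the text.
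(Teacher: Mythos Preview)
Your argument is correct: the Chernoff method together with Hoeffding's lemma on the moment generating function of a bounded centered variable yields the stated bound with $c=2$, and both routes you sketch for Hoeffding's lemma are standard and valid. The paper itself does not prove this lemma at all --- it is stated without proof and simply attributed to Theorems~2.2.6 and~2.6.2 of Vershynin~\cite{Versh} --- so you have actually supplied more than the paper does, and you correctly note this option at the end of your write-up.
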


The next Lemma follows immediately from Hoeffding's inequality with $X_i=(\eta^{\xi}_i-\xi_i)\theta_i$ and $[m_i, M_i]=[-\frac{\rho}{\sqrt{n}}\theta_i, \frac{\rho}{\sqrt{n}}\theta_i]$:

\begin{lemma}\label{subgauss}
	For any $\xi \in \sfe, \beta>0$ and $\theta\in\R^n,$

$$P(|\langle \eta^{\xi}-\xi, \theta\rangle| \geq \beta)\leq 2 \exp\left(-\frac{cn\beta^2}{|\theta|^2\rho^2} \right).$$
Here $c>0$ is an absolute constant.
\end{lemma}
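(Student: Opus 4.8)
The plan is to apply Hoeffding's inequality (Lemma~\ref{hoeffding}) directly to a single sum of independent bounded random variables; indeed the statement is essentially a reformulation of that inequality. We may assume $\theta \neq 0$, since otherwise the left-hand side is zero for every $\beta > 0$. First I would set, for $i = 1, \dots, n$,
$$X_i = (\eta^{\xi}_i - \xi_i)\, \theta_i.$$
By the construction of $\eta^{\xi}$ its coordinates are independent, hence so are the $X_i$. Since $|\eta^{\xi}_i - \xi_i| \le \rho/\sqrt{n}$ almost surely, the variable $X_i$ takes values in the interval $[m_i, M_i]$ with $m_i = -(\rho/\sqrt{n})\,|\theta_i|$ and $M_i = (\rho/\sqrt{n})\,|\theta_i|$, so that $M_i - m_i = 2\rho\,|\theta_i|/\sqrt{n}$. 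Moreover $\E X_i = \theta_i\,\E(\eta^{\xi}_i - \xi_i) = 0$, because $\E \eta^{\xi} = \xi$. Finally $\sum_{i=1}^n X_i = \langle \eta^{\xi} - \xi, \theta \rangle$.

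With these choices Lemma~\ref{hoeffding} gives
$$P\!\left( |\langle \eta^{\xi} - \xi, \theta \rangle| \ge \beta \right) \le 2 \exp\!\left( - \frac{c \beta^2}{\sum_{i=1}^n (M_i - m_i)^2} \right) = 2 \exp\!\left( - \frac{c \beta^2}{(4 \rho^2 / n) \sum_{i=1}^n \theta_i^2} \right) = 2 \exp\!\left( - \frac{c n \beta^2}{4 \rho^2 |\theta|^2} \right),$$
and absorbing the harmless factor $4$ into the absolute constant $c$ yields the stated bound. There is no genuine obstacle here: the only point worth a moment's attention is that the sign of $\theta_i$ is irrelevant (which is why one uses $|\theta_i|$ in the endpoints of the interval), and that one should record $\E\eta^\xi = \xi$ and the coordinatewise independence, both of which are part of the construction of $\eta^\xi$.
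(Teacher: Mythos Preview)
Your proof is correct and matches the paper's own argument essentially verbatim: the paper also sets $X_i = (\eta^{\xi}_i - \xi_i)\theta_i$ with $[m_i,M_i] = [-\frac{\rho}{\sqrt{n}}\theta_i,\frac{\rho}{\sqrt{n}}\theta_i]$ and invokes Hoeffding's inequality directly. Your use of $|\theta_i|$ in the interval endpoints is in fact a small improvement in precision over the paper's stated choice, since it handles the case $\theta_i<0$ cleanly.
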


\section{The key estimate}

Let $N$ be  a positive integer, and consider independent random vectors $\theta_1,...,\theta_N$ uniformly distributed on the unit sphere $\sfe$. Unless specified otherwise, the expectation and the probability shall be considered with respect to their distribution.

For $r>0,$ abbreviate $$ \varphi(r)=e^{-\frac{r^2}{2}}.$$% \qquad \qquad (r > 0). $$
The main result of this section is the following Proposition.

\begin{proposition}\label{key}
There exist absolute constants $C_1,\ldots,C_5> 0$ with $C_2 \geq 2 C_5$ and the following property. Let $n \geq 5$, consider $r\in[C_2\sqrt{n}, n]$ and suppose that
$N \geq n$ satisfies $N\in[C_1n\log\frac{Nr}{n\sqrt{n}}, n^{10}]$. Then with probability at least $1-e^{-5n}$, for all $\xi\in\sfe$, and for all $t\in\R,$
$$\frac{1}{N}\sum_{k=1}^N \varphi(r\langle \xi, \theta_k\rangle+t)\leq C_3\sqrt{\frac{n\log\frac{Nr}{n\sqrt{n}}}{N}}+\left(1+\frac{C_4\sqrt{n}}{r}\right)\frac{\sqrt{n}}{r}\varphi\left(\frac{q\sqrt{n}}{r} t\right),$$
where $q \geq 1-C_5 \sqrt{n} / {r}$.
\end{proposition}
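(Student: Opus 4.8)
The strategy is a union bound over the net $\cF_\rho$ combined with the random-rounding passage from an arbitrary $\xi \in \sfe$ to its rounded image $\eta^\xi \in \cF_\rho$. I would set the net parameter to be $\rho \sim \sqrt{n}/r$, so that the scale $\rho/\sqrt{n} \sim 1/r$ matches the natural length scale of the Gaussian-type kernel $r \mapsto \varphi(r \langle \xi, \theta \rangle + t)$ in the $\xi$-variable. The proof breaks into three parts: (i) a pointwise large-deviation estimate for the empirical average $\frac1N \sum_k \varphi(r\langle \eta, \theta_k\rangle + t)$ at a \emph{fixed} $\eta$ and fixed $t$, valid with probability $1 - e^{-Cn}$ with $C$ large; (ii) a discretization in $t$, reducing to countably many (in fact $\mathrm{poly}(n)$ many, after truncating $|t|$ at something like $n$) values of $t$, so that a union bound over $\cF_\rho$ and over the $t$-grid still leaves probability $\geq 1 - e^{-5n}$, using $\#\cF_\rho \leq (C/\rho)^n$ and the hypothesis $N \leq n^{10}$; and (iii) the transference step: given the bound holds simultaneously for all $\eta \in \cF_\rho$ and all grid points $t$, deduce it for all $\xi \in \sfe$ and all $t \in \R$ by writing $\xi = \eta^\xi - (\eta^\xi - \xi)$, controlling $\langle \eta^\xi - \xi, \theta_k \rangle$ via Lemma \ref{subgauss}, and using the Lipschitz/smoothness properties of $\varphi$.

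For part (i), the random variables $Y_k = \varphi(r\langle \eta, \theta_k\rangle + t)$ are i.i.d., bounded in $[0,1]$, with mean $m(\eta,t) = \E \varphi(r \langle \eta, \theta_1\rangle + t)$. Since $\langle \eta, \theta_1 \rangle$ is (up to the factor $|\eta| \approx 1$) a one-dimensional marginal of the uniform measure on $\sfe$, whose density is comparable to a Gaussian of standard deviation $1/\sqrt{n}$ near the origin, one computes $m(\eta, t) \approx \frac{\sqrt{n}}{r}\, \varphi\!\big(\tfrac{q\sqrt n}{r} t\big)$ with $q = q(\eta)$ close to $1$ — this is where the factor $\big(1 + C_4\sqrt{n}/r\big)\frac{\sqrt n}{r}\varphi(\tfrac{q\sqrt n}{r}t)$ in the statement comes from, the $(1+C_4\sqrt n/r)$ absorbing the discrepancy between the true marginal density and the Gaussian and the fact that $|\eta|\in[1-2\rho,1+\rho]$ rather than exactly $1$. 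Then Bernstein's inequality for bounded variables gives
$$
P\!\left( \frac1N\sum_k Y_k - m(\eta,t) \geq \lambda \right) \leq \exp\!\left( - c \min\!\left\{ \frac{N\lambda^2}{m(\eta,t)}, N\lambda \right\} \right),
$$
and choosing $\lambda \sim \sqrt{n m(\eta,t) \log(Nr/(n\sqrt n))/N} + n\log(Nr/(n\sqrt n))/N$ makes the exponent at least (a large constant)$\cdot n$. The additive term $C_3\sqrt{n\log(Nr/(n\sqrt n))/N}$ in the Proposition is exactly the worst case $\sqrt{n m \log(\cdots)/N}$ when $m = O(1)$; the lower-order Bernstein term $n\log(\cdots)/N$ is absorbed since $N \geq C_1 n \log(\cdots)$. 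I would also use the multiplicative-Chernoff refinement so that when $m(\eta,t)$ is tiny (large $|t|$) the fluctuation is genuinely of order $\sqrt{m \cdot (\text{stuff})/N}$, letting the $\varphi(\tfrac{q\sqrt n}{r}t)$ term dominate rather than the flat $C_3\sqrt{\cdot}$ term — though one can also just take the sum of the two terms, which is what is written.

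The \textbf{main obstacle} is part (iii), the transference, done carefully enough not to lose more than constant factors in the final bound. Fix $\xi$ and realize $\eta^\xi$; on the event from Lemma \ref{subgauss} (taking $\beta \sim \rho/\sqrt n \cdot \sqrt{\log N} \sim (1/r)\sqrt{\log N}$, which costs $N^{-c}$ per $\theta_k$, hence is fine after a union bound over $k \leq N \leq n^{10}$ and is dominated by $e^{-5n}$ once $\rho \sim \sqrt n /r$ and $r \geq C_2\sqrt n$), we have $|\langle \eta^\xi - \xi, \theta_k\rangle| \leq \beta$ for all $k$ simultaneously. One then writes $\varphi(r\langle\xi,\theta_k\rangle + t) = \varphi(r\langle \eta^\xi, \theta_k\rangle + t - r\langle \eta^\xi - \xi,\theta_k\rangle)$ and compares with $\varphi(r\langle \eta^\xi,\theta_k\rangle + t')$ for a nearby grid point $t'$: since $|\varphi'| \leq 1$ and in fact $|\varphi(a) - \varphi(b)| \leq |a - b| \cdot (\varphi(a) \vee \varphi(b)) \cdot \text{(something)}$ one must be slightly careful — the clean way is $|\varphi(a)-\varphi(b)| \le |a-b|\, e^{-\frac12(\min(|a|,|b|))^2}$-type bounds, which let the error term $r|\langle \eta^\xi - \xi,\theta_k\rangle| \lesssim r\beta \lesssim \sqrt{\log N} \lesssim \sqrt n$ be reabsorbed into a $(1 + C\sqrt n/r)$ factor and a shift of $q$ by $O(\sqrt n/r)$, matching the claimed $q \geq 1 - C_5\sqrt n/r$. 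The $t$-discretization must be compatible: the grid spacing in $t$ should be $\lesssim 1$ so that $|\varphi(\cdot + t) - \varphi(\cdot + t')|$ is controlled, and $|t|$ can be truncated at $O(n)$ because for $|t| \gtrsim n$ both sides are smaller than $e^{-5n}$ and the inequality is trivial. Assembling the $e^{-cn}$ union bound over $\#\cF_\rho \cdot (\text{grid size}) \leq (C/\rho)^n \cdot \mathrm{poly}(n) \leq e^{Cn}$ events, with $c$ chosen larger than $C + 5$ in part (i), yields the probability $\geq 1 - e^{-5n}$, completing the proof.
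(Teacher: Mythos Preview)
Your overall architecture---Hoeffding/Bernstein at fixed $(\eta,t)$, union bound over a $t$-grid and over $\cF_\rho$, then a transference from $\xi$ to $\eta^\xi$---matches the paper. The gap is in step (iii), both in the choice of $\rho$ and in the mechanism of transference.

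With your choice $\rho \sim \sqrt{n}/r$ and $\beta \sim (\rho/\sqrt{n})\sqrt{\log N}$, the shift in the argument of $\varphi$ is $r\beta \sim \sqrt{\log N}$. A Lipschitz comparison then costs an \emph{additive} error of order $\sqrt{\log N}$ per term (since $\sup|\varphi'|=1/\sqrt{e}$), which already exceeds the entire target bound (both terms on the right-hand side are $O(1)$). A multiplicative comparison is worse: $\varphi(a) \le \varphi(a')\,e^{|a'|\,r\beta}$, and for the relevant $k$ one only knows $|a'|\lesssim \sqrt{\log N}$, giving a factor $e^{c\log N}=N^{c}$. The inequality you quote, $|\varphi(a)-\varphi(b)|\le |a-b|\,e^{-\min(|a|,|b|)^2/2}$, is actually false (take $a=2,\ b=2.1$; it misses the factor $|x|$ from $\varphi'(x)=-x\varphi(x)$), and in any case the shifts $r\langle\eta-\xi,\theta_k\rangle$ vary with $k$, so they cannot be absorbed into a single $t$-shift or into moving $q$ by $O(\sqrt{n}/r)$. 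Separately, your remark that the rounding failure probability is ``dominated by $e^{-5n}$'' is off: it is only $N^{1-c}$, which suffices for existence of a good $\eta$ but is not exponentially small.

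The paper fixes this by two changes. First, it takes the finer scale $\rho=n^{3/2}/(Nr)$; this is why the logarithm in the statement is $\log(Nr/(n\sqrt{n}))$ and not $\log(r/\sqrt{n})$. Second, and crucially, it does \emph{not} pick a single realization of $\eta^\xi$ and compare pointwise. Instead it proves the elementary additive lower bound
\[
\E_{Y}\,\varphi(Y+a)\ \ge\ \varphi(a)\ -\ \frac{C}{M}
\]
for any mean-zero $Y$ that is subgaussian with parameter $1/M$ (Lemma~3.8). Applying this with $Y=r\langle\eta^\xi-\xi,\theta_k\rangle$ (subgaussian over the \emph{rounding} randomness, with $M\asymp \sqrt{n}/(r\rho)=cN/n$) and averaging over $k$ gives
\[
F(\xi,t)\ \le\ \E_{\eta}\,F(\eta^\xi,t)+\frac{Cn}{N}\ \le\ \max_{\eta\in\cF_\rho}F(\eta,t)+\frac{Cn}{N},
\]
after which the net bound applies. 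The additive loss $Cn/N$ is absorbed into the $C_3\sqrt{n\log(\cdot)/N}$ term since $N\ge C_1 n\log(\cdot)$. This averaging-then-max step is the missing idea; your pointwise Lipschitz route, at the scale you chose, does not close.
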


We shall require a few Lemmas, before we proceed with the proof of Proposition \ref{key}.

\subsection{Asymptotic estimates}

For a fixed vector $\eta \in \RR^n$ and $t\in\R$, denote
\begin{equation}\label{rho}
F(\eta, t)= \frac{1}{N}\sum_{k=1}^N \varphi (r\langle \eta, \theta_k\rangle+t).
\end{equation}
Observe that $F(\eta, t) \leq 1$ with probability one. %It was proven in \cite[Lemma 3.2]{KlKol} that for any fixed $\eta\in 2B_2^n\setminus (\frac{1}{2}B_2^n)$ and $r \geq 2\sqrt{n}$, there exist constants $C,c>0$ such that
%$$ \mathbb{E} \varphi (r\langle \eta, \theta_k\rangle+t) \leq C\frac{\sqrt{n}}{r}\varphi\left(\frac{c\sqrt{n}}{r}t\right) \qquad \qquad (k=1,\ldots,N). $$
First, we shall show a sharpening of \cite[Lemma 3.2]{KlKol}.

\begin{lemma}\label{expectation}
Let $n\geq 1.$ Let $\theta$ be a random vector uniformly distributed on $\mathbb{S}^{n+2}$. For any $r>0,$ for any $t\in\R,$ for any fixed $\eta\in\R^{n+3},$ one has
$$\mathbb{E}\varphi(r\langle \theta, \eta\rangle+t)\leq \left(1+\frac{c(\log n)^2}{n}\right)\frac{\sqrt{n}}{\sqrt{n+r^2|\eta|^2}}\varphi\left(\frac{t\sqrt{n}}{\sqrt{n+r^2|\eta|^2}}\right).$$
Here $c > 0$ is an absolute constant.
\end{lemma}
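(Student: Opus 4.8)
The plan is to reduce the spherical average to a one–dimensional integral against the weight $(1-s^2)^{n/2}$, and then compare that weight with the Gaussian density $e^{-ns^2/2}$. First, by rotational invariance of the uniform measure on $\mathbb{S}^{n+2}$ I may assume $\eta=|\eta|e_1$, so that $\langle\theta,\eta\rangle=|\eta|\,\theta_1$, where $\theta_1$ is the first coordinate of $\theta$. The marginal law of $\theta_1$ on $(-1,1)$ has density $(1-s^2)^{n/2}/Z$ with $Z=\int_{-1}^{1}(1-s^2)^{n/2}\,ds$; the exponent is exactly $n/2$ because $\mathbb{S}^{n+2}\subseteq\RR^{n+3}$ (the slice $\{\theta_1=s\}$ is an $(n+1)$–dimensional sphere of radius $\sqrt{1-s^2}$). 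Setting $a=r|\eta|$, the quantity to bound is $\frac1Z\int_{-1}^{1}\varphi(as+t)(1-s^2)^{n/2}\,ds$.

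For the numerator I would use the elementary inequality $(1-s^2)^{n/2}\le e^{-ns^2/2}$ (from $\log(1-x)\le-x$) and then enlarge the domain of integration to all of $\RR$. Completing the square via
$$(as+t)^2+ns^2=(a^2+n)\Bigl(s+\tfrac{at}{a^2+n}\Bigr)^{2}+\frac{nt^2}{a^2+n}$$
and evaluating the resulting Gaussian integral gives
$$\int_{-1}^{1}\varphi(as+t)(1-s^2)^{n/2}\,ds\ \le\ \sqrt{\frac{2\pi}{n+a^2}}\;\varphi\!\left(\frac{t\sqrt n}{\sqrt{n+a^2}}\right).$$
Since $a^2=r^2|\eta|^2$, the right–hand side is already of the claimed shape, and the lemma reduces to the purely numerical estimate $\sqrt{2\pi}\,/(Z\sqrt n)\le 1+c(\log n)^2/n$, i.e.\ to a lower bound $Z\ge\bigl(1-c'(\log n)^2/n\bigr)\sqrt{2\pi/n}$.

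To prove this lower bound I would split $\int_{-1}^1$ at a cutoff $\varepsilon=A\sqrt{(\log n)/n}$ for a suitable absolute constant $A$. On the central region $|s|\le\varepsilon$ a second–order expansion gives $(1-s^2)^{n/2}\ge e^{-ns^2/2}e^{-O(ns^4)}\ge e^{-ns^2/2}\bigl(1-O((\log n)^2/n)\bigr)$, while the standard tail bound yields $\int_{|s|>\varepsilon}e^{-ns^2/2}\,ds\le C n^{-A^2/2}\sqrt{2\pi/n}$, so that $\int_{|s|\le\varepsilon}e^{-ns^2/2}\,ds$ captures all but a negligible fraction of $\int_{\RR}e^{-ns^2/2}\,ds=\sqrt{2\pi/n}$. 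Combining the two gives $Z\ge\bigl(1-O((\log n)^2/n)\bigr)\sqrt{2\pi/n}$, which is exactly what is needed; substituting back completes the proof. (Alternatively, one could evaluate $Z=\sqrt\pi\,\Gamma(n/2+1)/\Gamma(n/2+3/2)$ and invoke Stirling's formula, which even yields an $O(1/n)$ error, a fortiori the claimed bound.) The only delicate point is the calibration of $\varepsilon$: it must be large enough that the Gaussian mass beyond it is of lower order, yet small enough that the correction $ns^4$ on $|s|\le\varepsilon$ stays $O((\log n)^2/n)$ — the scale $\sqrt{(\log n)/n}$ is exactly the window where both requirements hold, and this is where the $(\log n)^2$ in the statement originates.
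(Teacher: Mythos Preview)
Your proposal is correct and follows essentially the same route as the paper: reduce to the one-dimensional density $(1-s^2)^{n/2}/Z$, bound the numerator via $(1-s^2)^{n/2}\le e^{-ns^2/2}$ and complete the square, and bound $Z$ from below by comparing with the Gaussian on a window of scale $\sqrt{(\log n)/n}$ (the paper takes the constant $A=\sqrt2$). Your remark that $Z=\sqrt\pi\,\Gamma(n/2+1)/\Gamma(n/2+3/2)$ with Stirling gives an $O(1/n)$ error is a pleasant alternative the paper does not mention.
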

\begin{proof} Observe that the formulation of the Lemma allows to assume, without loss of generality, that $|\eta|=1$: indeed, in the case $\eta=0$ the statement is straight-forward, and otherwise it follows from the case $|\eta|=1$ by scaling. The random variable $\langle \theta,\eta\rangle$ is distributed on $[-1,1]$ according to the density
$$\frac{(1-s^2)^{\frac{n}{2}}}{\int_{-1}^1 (1-s^2)^{\frac{n}{2}} ds}.$$
Recall that for any $x\in[0,1]$,
\begin{equation}\label{log}
\log(1-x)=-x-\frac{x^2}{2}+O(x^3),
\end{equation}
and hence there is an absolute constant $C>0$ such that for any $x\in [0, \frac{2\log n}{n}]$,
\begin{equation}\label{log-est}
\log(1-x)\geq -x-\frac{C(\log n)^2}{n^2}.
\end{equation}

Applying (\ref{log-est}) with $x=s^2$, we estimate
$$
\int_{-1}^1 (1-s^2)^{\frac{n}{2}} ds\geq \int_{-\sqrt{\frac{2\log n}{n}}}^{\sqrt{\frac{2\log n}{n}}}(1-s^2)^{\frac{n}{2}} ds\geq $$
$$
\int_{-\sqrt{\frac{2\log n}{n}}}^{\sqrt{\frac{2\log n}{n}}}e^{-\frac{ns^2}{2}-\frac{C(\log n)^2}{2n}} ds \geq \left(1-\frac{c'(\log n)^2}{n}\right)\int_{-\sqrt{\frac{2\log n}{n}}}^{\sqrt{\frac{2\log n}{n}}}e^{-\frac{ns^2}{2}}ds=$$
\begin{equation}\label{integral}
\frac{1}{\sqrt{n}}\left(1-\frac{c'(\log n)^2}{n}\right)\int_{-\sqrt{2\log n}}^{\sqrt{2\log n}}e^{-\frac{s^2}{2}}ds.
\end{equation}
Recall that for any $a>0$, one has
\begin{equation}\label{gausstail1}
\int_a^{\infty} e^{-\frac{y^2}{2}} dy\leq \frac{1}{a}e^{-\frac{a^2}{2}},
\end{equation}
and therefore
\begin{equation}\label{gausstail}
\int_{-\sqrt{2\log n}}^{\sqrt{2\log n}}e^{-\frac{s^2}{2}}ds\geq \sqrt{2\pi}-\frac{\sqrt{2}}{n\sqrt{\log n}}.
\end{equation}
By (\ref{gausstail}) and (\ref{integral}), we conclude that there exists an absolute constant $\tilde{c}>0$ such that
\begin{equation}\label{circ-int}
\int_{-1}^1 (1-s^2)^{\frac{n}{2}}ds\geq \frac{\sqrt{2\pi}}{\sqrt{n}}\left(1-\frac{\tilde{c}(\log n)^2}{n}\right).
\end{equation}
We remark that the second order term estimate is of course not sharp, yet it is more than sufficient for our purposes.

Next, using the inequality $1-x\leq e^{-x}$ for $x=s^2$, we estimate from above
\begin{equation}\label{integral2}
\int_{-1}^1 (1-s^2)^{\frac{n}{2}} e^{-\frac{(rs+t)^2}{2}}ds\leq \int_{-\infty}^{\infty} e^{-\frac{ns^2+(rs+t)^2}{2}}ds.
\end{equation}
It remains to observe that
$$ns^2+(rs+t)^2=\left(\sqrt{n+r^2}s+\frac{tr}{\sqrt{n+r^2}}\right)^2+\frac{nt^2}{n+r^2},$$
and to conclude, by (\ref{integral2}), that
\begin{equation}\label{integralmain}
\int_{-1}^1 (1-s^2)^{\frac{n}{2}} e^{-\frac{(rs+t)^2}{2}}ds\leq \sqrt{2\pi}\frac{1}{\sqrt{n+r^2}}\varphi\left(\frac{\sqrt{n}t}{\sqrt{n+r^2}}\right).
\end{equation}
From (\ref{circ-int}) and (\ref{integralmain}) we note, for every unit vector $\eta:$
\begin{equation}\label{expconcl}
\mathbb{E}\varphi(r\langle \theta, \eta\rangle+t)\leq \left(1+\frac{c(\log n)^2}{n}\right)\frac{\sqrt{n}}{\sqrt{n+r^2}}\varphi\left(\frac{t\sqrt{n}}{\sqrt{n+r^2}}\right).
\end{equation}
%The conclusion follows for arbitrary $\eta\neq 0$ by scaling.
\end{proof}

As an immediate corollary of Lemma \ref{expectation} and Hoeffding's inequality, we get:

\begin{lemma}\label{oneeta}
Let $N\geq n\geq 4$, $r \geq \sqrt{n}$ and $\rho\in (0,\frac{1}{3})$. There exist absolute constants $c,C, C'>0$ such that for all $\eta\in (1+\rho)B_2^n \setminus (1-2\rho)B_2^n$ and $t \in \RR, \beta > 0$,
$$P\left(F(\eta, t)>\beta+(1+c(\rho+\frac{(\log n)^2}{n}+\frac{n}{r^2}))\frac{\sqrt{n}}{r}\varphi\left(\frac{q t\sqrt{n}}{r}\right)\right)\leq e^{-C\beta^2 N},$$
where $q \geq 1-C'(\rho+\frac{n}{r^2}).$
\end{lemma}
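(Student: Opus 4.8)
The plan is to bound the mean $\mathbb{E} F(\eta,t)$ via Lemma \ref{expectation} and then to pass from this mean to $F(\eta,t)$ itself by Hoeffding's inequality, exploiting that $F(\eta,t)$ is an average of i.i.d.\ bounded random variables. Fix $\eta\in(1+\rho)B_2^n\setminus(1-2\rho)B_2^n$ and $t\in\R$, and put $Y_k=\varphi(r\langle\theta_k,\eta\rangle+t)$ for $k=1,\dots,N$, so that $F(\eta,t)=\frac1N\sum_{k=1}^N Y_k$ is an average of i.i.d.\ random variables with values in $(0,1]$ and common mean $\mathbb{E} Y_1=\mathbb{E} F(\eta,t)$.

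\medskip

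Since $\theta_1$ is uniform on $\sfe=\mathbb{S}^{n-1}$ and $n\geq 4$, I would apply Lemma \ref{expectation} with parameter $n-3\geq 1$ (so that $\mathbb{S}^{(n-3)+2}=\mathbb{S}^{n-1}$ and $\R^{(n-3)+3}=\R^n$), obtaining
$$\mathbb{E} Y_1\leq\left(1+\frac{c_0(\log(n-3))^2}{n-3}\right)\frac{\sqrt{n-3}}{\sqrt{n-3+r^2|\eta|^2}}\,\varphi\!\left(\frac{t\sqrt{n-3}}{\sqrt{n-3+r^2|\eta|^2}}\right),$$
where $c_0$ is the absolute constant of that lemma. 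I then put
$$q:=\frac{r\sqrt{n-3}}{\sqrt{n}\,\sqrt{n-3+r^2|\eta|^2}}=\frac{\sqrt{n-3}}{\sqrt{n}}\cdot\frac{1}{\sqrt{\tfrac{n-3}{r^2}+|\eta|^2}}>0,$$
chosen exactly so that $\frac{t\sqrt{n-3}}{\sqrt{n-3+r^2|\eta|^2}}=\frac{qt\sqrt n}{r}$ for every $t$; hence the two Gaussian factors coincide. It remains to absorb the difference between $n-3$ and $n$, and the deviation of $|\eta|$ from $1$, into the stated error terms. For the prefactor, using $|\eta|\geq 1-2\rho$, $\rho<\tfrac13$ and $n\geq 4$ one has $\frac{\sqrt{n-3}}{\sqrt{n-3+r^2|\eta|^2}}\leq\frac{\sqrt n}{r|\eta|}\leq(1+6\rho)\frac{\sqrt n}{r}$ and $\frac{(\log(n-3))^2}{n-3}\leq\frac{4(\log n)^2}{n}$, so the whole prefactor is at most $\bigl(1+c(\rho+\tfrac{(\log n)^2}{n}+\tfrac{n}{r^2})\bigr)\tfrac{\sqrt n}{r}$ for a suitable absolute $c$. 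For $q$, combining $\sqrt{1-x}\geq 1-x$ and $(1+x)^{-1/2}\geq 1-\tfrac x2$ with $|\eta|^2\leq(1+\rho)^2\leq 1+3\rho$, $\tfrac{n-3}{r^2}\leq\tfrac{n}{r^2}$ and $r\geq\sqrt n$, one checks that $q\geq 1-C'(\rho+\tfrac{n}{r^2})$ for a suitable absolute $C'$. Writing $A=\bigl(1+c(\rho+\tfrac{(\log n)^2}{n}+\tfrac{n}{r^2})\bigr)\tfrac{\sqrt n}{r}\,\varphi(\tfrac{qt\sqrt n}{r})$ for the bound so obtained, this shows $\mathbb{E} F(\eta,t)=\mathbb{E} Y_1\leq A$.

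\medskip

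To conclude, note that $\{F(\eta,t)>\beta+A\}\subseteq\{F(\eta,t)-\mathbb{E} F(\eta,t)>\beta\}$, and apply Hoeffding's inequality (Lemma \ref{hoeffding}) to the i.i.d.\ $[0,1]$-valued summands $Y_k$, for which $\sum_{i=1}^N(M_i-m_i)^2=N$; this bounds the probability of the latter event, hence of $\{F(\eta,t)>\beta+A\}$, by $e^{-C\beta^2 N}$ for an absolute constant $C>0$, which is precisely the assertion. The probabilistic step is thus a one-line application of Hoeffding. I expect the only part requiring genuine care to be the simplification in the previous paragraph — the passage from the sphere dimension $n$ to the parameter $n-3$ in Lemma \ref{expectation} and the ensuing bookkeeping of the lower-order terms $\rho$, $(\log n)^2/n$ and $n/r^2$; this is where the constants $c,C,C'$ are pinned down, but it is routine rather than conceptual.
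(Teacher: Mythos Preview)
Your proof is correct and follows exactly the paper's approach: bound $\mathbb{E}F(\eta,t)$ via Lemma~\ref{expectation}, then apply Hoeffding's inequality to the i.i.d.\ $[0,1]$-valued summands. You are in fact more careful than the paper in tracking the dimension shift $n\mapsto n-3$ when invoking Lemma~\ref{expectation} (the paper silently writes $n$ in place of $n-3$). One tiny bookkeeping point: the factor $\sqrt{(n-3)/n}$ in your explicit $q$ contributes a $-3/n$ which is not literally dominated by $C'(\rho+n/r^2)$ for arbitrary $r\geq\sqrt{n}$; this is harmless since in every use of the lemma one has $r\leq n$ (so $3/n\leq 3n/r^2$), and the paper's own proof glosses over the same point.
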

\begin{proof} In view of Lemma \ref{hoeffding} (Hoeffding's inequality), it suffices to show that under the assumptions of the Lemma,
\begin{equation}\label{toshow}
\mathbb{E}\varphi(r\langle \theta, \eta\rangle+t)\leq (1+c(\rho+\frac{(\log n)^2}{n}+\frac{n}{r^2}))\frac{\sqrt{n}}{r}\varphi\left(\frac{q t\sqrt{n}}{r}\right).
\end{equation}
Indeed, by Lemma \ref{expectation}, for some $c_1>0,$
$$\mathbb{E}\varphi(r\langle \theta, \eta\rangle+t)\leq \left(1+\frac{c_1(\log n)^2}{n}\right)\frac{\sqrt{n}}{\sqrt{n+r^2|\eta|^2}}\varphi\left(\frac{t\sqrt{n}}{\sqrt{n+r^2|\eta|^2}}\right).$$
It remains to observe, that since $r \geq \sqrt{n}$,
$$\frac{|t|\sqrt{n}}{\sqrt{n+r^2|\eta|^2}}\geq \frac{q |t|\sqrt{n}}{r},$$
where $q=1+O(\rho+\frac{n}{r^2})$, and
$$\left(1+\frac{c_1(\log n)^2}{n}\right)\frac{\sqrt{n}}{\sqrt{n+r^2|\eta|^2}}\leq \left(1+c(\rho+\frac{(\log n)^2}{n}+\frac{n}{r^2})\right)\frac{\sqrt{n}}{r},$$
with an appropriate constant $c>0$. % Here we used that $\rho>\rho^2$.
\end{proof}

\medskip

\subsection{Union bound} Given $\rho>0$, recall  the notation $\mathcal{F}_{\rho}$ for the net from Lemma \ref{net}. Our next Lemma is a combination of the union bound with Lemma \ref{oneeta}.

\begin{lemma}[union bound]\label{estonnet}
There exist absolute constants $C_1, C_2, C'>0$ such that the following holds. Let $\rho\in (0, \frac{1}{3})$. Let $N\in [C_1n\log\frac{1}{\rho}, n^{10}]$ be an integer. Fix $r\in[C_2\sqrt{n}, n]$. Then with probability at least $1-e^{-5n}$, for every $\eta\in\mathcal{F}_{\rho}$, and for every $t\in \R$,
$$F(\eta, t)\leq C_6\sqrt{\frac{n}{N}\log\frac{1}{\rho}}+\left(1+C_7(\rho+\frac{n}{r^2}+\frac{(\log n)^2}{n}+\frac{1}{r})\right)\frac{\sqrt{n}}{r}\varphi\left(\frac{q\sqrt{n}t}{r}\right),$$
for large enough absolute constants $C_6, C_7>0,$ which depend only on $C_1$ and $C_2,$ and for $q\geq 1-C'(\rho+\frac{n}{r^2}).$
\end{lemma}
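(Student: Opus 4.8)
The plan is to combine the union bound over the finite net $\mathcal{F}_\rho$ with the single-vector tail estimate from Lemma \ref{oneeta}. Concretely, for each fixed $\eta \in \mathcal{F}_\rho$, Lemma \ref{oneeta} gives, for every $t \in \RR$ and $\beta > 0$,
$$
P\!\left( F(\eta,t) > \beta + \Big(1 + c\big(\rho + \tfrac{(\log n)^2}{n} + \tfrac{n}{r^2}\big)\Big)\frac{\sqrt{n}}{r}\varphi\!\Big(\frac{q t \sqrt n}{r}\Big) \right) \leq e^{-C\beta^2 N}.
$$
We must first upgrade this from "for fixed $t$" to "for all $t$ simultaneously"; since $\varphi$ is continuous and $F(\eta,\cdot)$ is a finite sum of shifted Gaussians, it suffices to run a further union bound over a sufficiently fine net of $t$-values in the region where $F(\eta,t)$ is not already negligibly small (say $|t| \leq C\sqrt{n\log N}$, outside of which every $\varphi(r\langle\eta,\theta_k\rangle + t)$ is smaller than any polynomial-in-$n$ error because $r \geq C_2\sqrt n$ and $N \leq n^{10}$), and to absorb the discretization error of size $O(1/\mathrm{poly}(n))$ into the constant $C_6$. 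Alternatively one observes directly that $F(\eta,t) \le 1$ always and handles large $t$ by monotonicity of each Gaussian term away from its center.

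Next I would choose the parameter $\beta$. Take $\beta = C_6' \sqrt{\tfrac{n}{N}\log\tfrac1\rho}$ with $C_6'$ large. Then $e^{-C\beta^2 N} = e^{-C (C_6')^2 n \log(1/\rho)} = \rho^{\,C (C_6')^2 n}$, and by Lemma \ref{net} we have $\#\mathcal{F}_\rho \leq (C/\rho)^n$. A union bound over $\mathcal{F}_\rho$ (and over the auxiliary $t$-net, which has at most $\mathrm{poly}(n)$ points and costs only a factor that is harmless on the log scale) gives a failure probability at most
$$
\left(\frac{C}{\rho}\right)^{n} \cdot \mathrm{poly}(n) \cdot \rho^{\,C (C_6')^2 n} \leq e^{-5n},
$$
provided $C_6'$ is chosen large enough depending on the absolute constant $C$ (and on $C_1, C_2$), using $\rho < 1/3$ so that $\log(1/\rho)$ is bounded below. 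The hypothesis $N \in [C_1 n \log\tfrac1\rho, n^{10}]$ is exactly what makes $\beta \leq 1$ (so the bound is nontrivial) and keeps the polynomial factors from the $t$-net under control. Finally, I combine the error terms: the additive term $\beta$ becomes $C_6\sqrt{\tfrac{n}{N}\log\tfrac1\rho}$ and the multiplicative prefactor $1 + c(\rho + \tfrac{(\log n)^2}{n} + \tfrac{n}{r^2})$ is enlarged to $1 + C_7(\rho + \tfrac{n}{r^2} + \tfrac{(\log n)^2}{n} + \tfrac1r)$ to also swallow the $t$-net discretization error (the extra $1/r$ slack comes from the width of the $t$-net, chosen of order $r^{-1}\cdot\mathrm{poly}(1/n)$), while $q \geq 1 - C'(\rho + \tfrac{n}{r^2})$ is inherited verbatim from Lemma \ref{oneeta}.

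The main obstacle is the passage from a pointwise-in-$t$ statement to a uniform-in-$t$ statement without losing the sharp constants: one needs the $t$-net to be fine enough that the chaining error is genuinely lower order (absorbed into $C_6$ and $C_7$), yet coarse enough — polynomial in $n$ many points — that the union bound still closes against $e^{-5n}$. This is delicate only in bookkeeping: since each summand $\varphi(r\langle\eta,\theta_k\rangle+t)$ has Lipschitz constant at most $r$ in $t$, a net of spacing $\delta$ introduces an error at most $r\delta$, and choosing $\delta = n^{-10}/r$ makes this error $O(n^{-10})$ while the net has $O(r^2 n^{10}\sqrt{\log N}) = \mathrm{poly}(n)$ points (using $r \leq n$), whose logarithm is $O(\log n)$ and hence negligible next to $n$. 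Everything else is a direct substitution of Lemma \ref{oneeta} into the union bound.
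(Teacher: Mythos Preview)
Your approach is essentially the paper's: apply Lemma~\ref{oneeta} for each fixed $(\eta,t)$, take a union bound over $\mathcal{F}_\rho$ and a finite net in $t$, and choose $\beta$ of order $\sqrt{(n/N)\log(1/\rho)}$ so that the failure probability beats $(\#\mathcal{F}_\rho)\cdot(\#\text{t-net})$. The structure and the choice of $\beta$ match exactly.

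There is, however, a concrete slip in your handling of large $|t|$. You claim that outside $|t|\leq C\sqrt{n\log N}$ every term $\varphi(r\langle\eta,\theta_k\rangle+t)$ is negligible. This is false when $r$ is large: since $|r\langle\eta,\theta_k\rangle|$ can be as big as $2r$ (and $r$ may be as large as $n$), for $|t|$ of order $\sqrt{n\log n}$ the argument $r\langle\eta,\theta_k\rangle+t$ can still vanish. The correct cutoff is $|t|>3r$ (as the paper uses): then $|r\langle\eta,\theta_k\rangle+t|\geq r\geq C_2\sqrt{n}$, so $F(\eta,t)\leq e^{-r^2/2}\leq e^{-C_2^2 n/2}$ deterministically, which is below the additive term since $\beta\geq C_6\sqrt{\log 2}\cdot n^{-4.5}$ by $N\leq n^{10}$. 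Once you replace your range by $|t|\leq 3r$, your $t$-net still has only $\mathrm{poly}(n)$ points and everything goes through.

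A minor stylistic difference: the paper uses a coarser $t$-net of spacing $1/r^2$ on $[0,3r]$ together with a \emph{multiplicative} comparison $F(\eta,t)\leq(1+C'/r)F(\eta,\tau)$; this is where the $1/r$ term inside $C_7$ actually originates. Your additive Lipschitz comparison with a much finer net also works (the Lipschitz constant in $t$ is in fact at most $1/\sqrt{e}$, not $r$), but then the $1/r$ in the statement is merely an upper bound on a smaller error, not a natural outcome of the argument.
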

\begin{proof} Let
$$\alpha=C_6\sqrt{\frac{n}{N}\log\frac{1}{\rho}}+\left(1+C_7(\rho+\frac{n}{r^2}+\frac{(\log n)^2}{n}+\frac{1}{r})\right)\frac{\sqrt{n}}{r}\varphi\left(\frac{q\sqrt{n}t}{r}\right),$$
where $q \geq 1 - C'(\rho + n/r^2)$ and the constants shall be appropriately chosen later. Note that
\begin{equation}\label{alpha}
\alpha\geq C_6\sqrt{\frac{n}{N}\log\frac{1}{\rho}}\geq n^{-4.5}\cdot C_6\sqrt{\log 2},
\end{equation}
since $\rho\leq \frac{1}{2}$ and $N\leq n^{10}$.

Observe also that for any pair of vectors $\theta\in\sfe$, $\eta\in \mathcal{F}_{\rho}\subset 2B_2^n$ and for any $t\geq 3r$, we have
$$|r\langle \eta,\theta\rangle+t|\geq r,$$
and hence
\begin{equation}\label{trivialbound}
e^{-\frac{1}{2}(r\langle \eta,\theta\rangle+t)^2}\leq e^{-\frac{r^2}{2}}.
\end{equation}

In view of (\ref{alpha}), (\ref{trivialbound}), and the fact that $r\geq \sqrt{n}$, we have, for $t\geq 3r$:
$$F(\eta, t)\leq e^{-\frac{r^2}{2}}\leq e^{-\frac{n}{2}}\leq n^{-4.5}C_6\sqrt{\log 2} \leq\alpha,$$
where the inequality follows as long as $C_6$ is chosen to be larger than $1+o(1)$. This implies the statement of the Lemma in the range $t\geq 3r$.

Next, suppose $t\in [0,3r].$ Let $\epsilon=\frac{1}{r^2}$. Consider an $\epsilon$-net $\mathcal{N}=\{t_1,...,t_{m}\}$ on the interval $[0,3r]$
with $t_j = \epsilon \cdot j$. Note that
\begin{equation}\label{net-t}
\#\mathcal{N}\leq [3r^3]+1\leq 4r^3,
\end{equation}
since $r\geq \sqrt{n}\geq 1.$

For any $A\in \R$, for any $\epsilon>0$, and for any $t_1, t_2\in\R$ such that $|t_1-t_2| \leq \epsilon,$ we have
$$|A+t_2|^2\leq |A+t_1|^2+2\epsilon |A+t_1|+\epsilon^2,$$
and hence
\begin{equation}\label{tnet}
\varphi(A+t_1)\leq \varphi (A+t_2) e^{|A+t_1|\epsilon+\frac{\epsilon^2}{2}}.
\end{equation}

Observe that for all $t\in [0, 3r]$, for an arbitrary $\eta\in\mathcal{F}_{\rho}\subset 2B_2^n$, and any $\theta\in \sfe,$ we have
$$|r\langle \eta, \theta\rangle+t|\leq 5r,$$
and hence
\begin{equation}\label{estimate1}
e^{|r\langle \eta, \theta\rangle+t|\epsilon+\frac{\epsilon^2}{2}}\leq e^{5r\epsilon+\frac{\epsilon^2}{2}} = e^{\frac{5}{r}+\frac{1}{2r^2}}\leq 1+\frac{C'}{r},
\end{equation}
for an absolute constant $C'.$

By (\ref{tnet}) and (\ref{estimate1}), for each $t\in [0, 3r]$ there exists  $\tau\in\mathcal{N}$, such that
$$F(\eta, t)\leq (1+\frac{C'}{r}) F(\eta, \tau).$$
Therefore, by the union bound,
$$P\left(\exists t\in [0, 3r],\,\, \exists \eta\in\mathcal{F}_{\rho}:\,\, F(\eta, t)>\alpha\right)\leq$$
$$P\left(\exists \tau\in \mathcal{N},\,\, \exists \eta\in\mathcal{F}_{\rho}:\,\, F(\eta, \tau)>\frac{\alpha}{1+\frac{C'}{r}}\right)\leq$$
\begin{equation}\label{concl}
\#\mathcal{N}\cdot\#\mathcal{F}_{\rho}\cdot P\left(F(\eta, \tau)>\frac{\alpha}{1+\frac{C'}{r}}\right).
\end{equation}

By Lemma \ref{net} and (\ref{net-t}),
\begin{equation}\label{netbound}
\#\mathcal{N}\cdot\#\mathcal{F}_{\rho}\leq 4r^3\left(\frac{C}{\rho}\right)^n\leq \left(\frac{\tilde{C}}{\rho}\right)^n.
\end{equation}
We used above that $r\leq n.$

Let
$$\beta:=\left(1+\frac{C'}{r}\right)^{-1}C_6\sqrt{\frac{n}{N}\log\frac{1}{\rho}}.$$

Provided that $C_6$ and $C_7$ are chosen large enough, we have:
\begin{equation}\label{eqvation}
\frac{\alpha}{1+\frac{C'}{r}}\geq \beta+(1+c(\rho+\frac{(\log n)^2}{n}+\frac{n}{r^2}))\frac{\sqrt{n}}{r}\varphi\left(\frac{q t\sqrt{n}}{r}\right),
\end{equation}
and
\begin{equation}\label{eqvation2}
C\beta^2 N=C(1+\frac{C'}{r})^{-2}C_6^2n\log\frac{1}{\rho}\geq 5n+n\log\frac{\tilde{C}}{\rho},
\end{equation}
where $c$ and $C$ are the constants from Lemma \ref{oneeta}
and $\tilde{C}$ is the constant from (\ref{netbound}).

By Lemma \ref{oneeta}, (\ref{eqvation}) and (\ref{eqvation2}), we have
\begin{equation}\label{prob}
P\left(F(\eta, t)>\frac{\alpha}{1+\frac{C'}{r}}\right)\leq e^{-C\beta^2N}\leq e^{-5n-n\log\frac{\tilde{C}}{\rho}}.
\end{equation}

By (\ref{concl}), (\ref{netbound}) and (\ref{prob}), we conclude that the desired event holds with probability at least
$$1-\left(\frac{\tilde{C}}{\rho}\right)^ne^{-5n-n\log\frac{\tilde{C}}{\rho}}=1- e^{-5n}.$$
This finishes the proof.
\end{proof}

\medskip

\subsection{An application of random rounding and conclusion of the proof of the Proposition \ref{key}.} We begin by formulating a general fact about subgaussian random variables, which complements the estimate from Lemma \ref{expectation}.

\begin{lemma}\label{lowerbound}
Let $M\geq 10$. Let $Y$ be a subgaussian random variable with constant $\frac{1}{M}$: that is, suppose for any $s>0,$
\begin{equation}\label{condsub}
P(|Y|>s)\leq e^{-M^2s^2}.
\end{equation}
Then there exists an absolute constant $C>0$, such that for any $a\in\R,$
$$\E\varphi(Y+a)\geq \varphi(a)-\frac{C}{M}.$$
Here the expectation is taken with respect to $Y.$
\end{lemma}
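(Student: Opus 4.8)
The plan is to expand $\varphi(Y+a)$ around $\varphi(a)$ and control the remainder using the subgaussian tail of $Y$. Writing
$$
\varphi(Y+a) = e^{-(Y+a)^2/2} = e^{-a^2/2} e^{-aY - Y^2/2} = \varphi(a)\, e^{-aY - Y^2/2},
$$
the key observation is that $e^{-Y^2/2} \geq 1 - Y^2/2$ pointwise, and $e^{-aY} \geq 1 - aY$ pointwise, but multiplying these two lower bounds directly produces a cross term $+\,aY^3/2$ of indefinite sign, so I would instead peel off the two factors separately. The cleanest route: first note $\varphi(Y+a) \geq \varphi(a) e^{-aY}(1 - Y^2/2)$, then split into the event $\{|Y| \leq 1\}$ and its complement. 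On $\{|Y|\le 1\}$ one has $1 - Y^2/2 \ge 1/2 > 0$, and on the complement $\varphi(Y+a) \ge 0$ trivially. Actually, a more symmetric and less error-prone approach is to write
$$
\E \varphi(Y+a) \geq \varphi(a) - \E\bigl|\varphi(Y+a) - \varphi(a)\bigr|
$$
and bound the right-hand expectation. Since $\varphi$ is Lipschitz on $\R$ with $|\varphi'(x)| = |x| e^{-x^2/2} \leq e^{-1/2} \leq 1$, we get $|\varphi(Y+a) - \varphi(a)| \leq |Y|$ for all realizations, hence $\E|\varphi(Y+a)-\varphi(a)| \leq \E|Y|$.

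It then remains to show $\E|Y| \leq C/M$ for some absolute constant $C$. This follows from the layer-cake formula together with the hypothesis (\ref{condsub}):
$$
\E|Y| = \int_0^\infty P(|Y| > s)\, ds \leq \int_0^\infty e^{-M^2 s^2}\, ds = \frac{\sqrt{\pi}}{2M}.
$$
Combining the two displays gives $\E\varphi(Y+a) \geq \varphi(a) - \sqrt{\pi}/(2M)$, which is the claimed bound with $C = \sqrt{\pi}/2$ (the hypothesis $M \geq 10$ is not even needed for this argument, but it does no harm to keep it).

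I do not anticipate any serious obstacle here; the only thing to be careful about is choosing a bound on $\varphi$ and its derivative that avoids any appeal to higher moments of $Y$ and keeps the constant genuinely absolute. Using global Lipschitzness of the Gaussian bump (with constant $\max_x |x|e^{-x^2/2} = e^{-1/2}$) sidesteps the need to localize near $a$, and the layer-cake integral is the natural way to convert the subgaussian tail into an $L^1$ bound. One could get a slightly better constant by integrating $P(|Y|>s)$ against the sharper bound $\min(1, e^{-M^2 s^2})$, but since the statement only asks for an absolute constant $C$, the crude estimate above suffices.
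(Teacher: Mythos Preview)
Your proof is correct and in fact more economical than the paper's. The paper proceeds by a layer-cake representation of $\E\varphi(Y+a)$: it writes
\[
\E\varphi(Y+a)=\int_0^\infty s e^{-s^2/2}\,P(|Y+a|<s)\,ds,
\]
restricts the integral to $s\geq a$ (assuming $a\geq 0$ by symmetry), and then bounds the deficit $\int_a^\infty s e^{-s^2/2}P(|Y+a|\geq s)\,ds$ using $P(|Y+a|\geq s)\leq 2P(|Y|\geq s-a)$ together with the subgaussian tail and the pointwise bound $(t+a)e^{-(t+a)^2/2}\leq e^{-1/2}$. This yields $C=\sqrt{\pi}/\sqrt{e}$.

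Your route is genuinely different: you never unpack $\E\varphi(Y+a)$ as an integral over levels of $|Y+a|$, but instead invoke the global Lipschitz bound $|\varphi'|\leq e^{-1/2}$ to get $|\varphi(Y+a)-\varphi(a)|\leq |Y|$ and then bound $\E|Y|$ by layer cake. This is shorter, avoids the symmetry reduction, and actually gives a slightly better constant (your stated $C=\sqrt{\pi}/2$, or $C=e^{-1/2}\sqrt{\pi}/2$ if you keep the sharp Lipschitz constant). The paper's approach, on the other hand, works more directly with the tail of $Y+a$ and would generalize to test functions $\varphi$ that are not globally Lipschitz but merely have controlled level sets; for the Gaussian bump at hand, however, your argument is the cleaner one.
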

\begin{proof} Since the condition (\ref{condsub}) applies for both $Y$ and $-Y$, and since $\varphi$ is an even function, we may assume, without loss of generality, that $a\geq 0$ (alternatively, we may replace $a$ with $|a|$ in the calculations below).

We begin by writing
$$
\E\varphi(Y+a)=\int_0^1 P(\varphi(Y+a)>\lambda)d\lambda=\int_0^{\infty} se^{-\frac{s^2}{2}}P(|Y+a|<s)ds\geq$$
\begin{equation}\label{exp-eq1}
\int_a^{\infty} s e^{-\frac{s^2}{2}}\left(1-P(|Y+a|\geq s)\right)ds=e^{-\frac{a^2}{2}}-\int_a^{\infty} s e^{-\frac{s^2}{2}}P(|Y+a|\geq s)ds.
\end{equation}
Note that for $s\geq a\geq 0,$ we have
\begin{equation}\label{eqprob}
P(|Y+a|\geq s)=P(Y\geq s-a)+P(-Y\geq s+a)\leq 2P(|Y|\geq s-a).
\end{equation}
By (\ref{condsub}) and (\ref{eqprob}), we estimate
$$
\int_a^{\infty} s e^{-\frac{s^2}{2}}P(|Y+a|\geq s)ds\leq 2\int_{a}^{\infty} se^{-\frac{s^2}{2}}e^{-M^2(s-a)^2}ds=$$
\begin{equation}\label{int-estimate}
2\int_0^{\infty} (t+a)e^{-\frac{(t+a)^2}{2}}e^{-M^2t^2}dt.
\end{equation}
Recall that
\begin{equation}\label{xe-x2}
(t+a)e^{-{\frac{(t+a)^2}{2}}}\leq\frac{1}{\sqrt{e}},
\end{equation}
and that
\begin{equation}\label{gaussintegral}
\int_0^{\infty} e^{-M^2t^2}dt=\frac{\sqrt{\pi}}{2M}.
\end{equation}
By (\ref{exp-eq1}), (\ref{int-estimate}), (\ref{xe-x2}) and (\ref{gaussintegral}), letting $C=\frac{\sqrt{\pi}}{\sqrt{e}}$, we have
\begin{equation}\label{finalineq}
\E\varphi(Y+a)\geq \varphi(a)-\frac{C}{M},
\end{equation}
yielding the conclusion.

%$$2\int_0^{\infty} (s+a)e^{-\frac{(2M^2+1)s^2+2as+a^2}{2}}ds=
%$$
%$$2e^{-\frac{a^2}{2}\left(1-\frac{1}{2M^2+1}\right)}\left(\int_0^{\infty}se^{-\frac{\left(\sqrt{2M^2+1}s+\frac{a}{\sqrt{2M^2+1}}\right)^2}{2}}ds+\int_0^{\infty}ae^{-\frac{\left(\sqrt{2M^2+1}s+\frac{a}{\sqrt{2M^2+1}}\right)^2}{2}}ds\right)=$$
%\begin{equation}\label{estimatewithM}
%2e^{-\frac{a^2}{2}\left(1-\frac{1}{2M^2+1}\right)}\left(\frac{1}{2M^2+1}\int_0^{\infty}te^{-\frac{\left(t+\frac{a}{\sqrt{2M^2+1}}\right)^2}{2}}dt+\frac{a}{\sqrt{2M^2+1}}\int_{\frac{a}{\sqrt{2M^2+1}}}^{\infty}e^{-\frac{t^2}{2}}dt\right).
%\end{equation}
%We estimate
%\begin{equation}\label{int-1}
%\int_0^{\infty}te^{-\frac{\left(t+\frac{a}{\sqrt{2M^2+1}}\right)^2}{2}}dt\leq \int_0^{\infty} te^{-\frac{t^2}{2}}dt=1,
%\end{equation}
%and
%\begin{equation}\label{int-2}
%\frac{a}{\sqrt{2M^2+1}}\int_{\frac{a}{\sqrt{2M^2+1}}}^{\infty}e^{-\frac{t^2}{2}}dt\leq e^{-\frac{a^2}{2(2M^2+1)}}\leq 1.
%\end{equation}

\end{proof}

Next, we shall demonstrate the following corollary of Lemma \ref{subgauss} and Lemma \ref{lowerbound}.

\begin{corollary}\label{maincor}
There exist absolute constants $C, c>0$ such that for any $M, r > 0$ and $\rho\in (0,\frac{c\sqrt{n}}{rM}]$, and for any $\xi\in\sfe,$
$$F(\xi,t)\leq \mathbb{E}_{\eta}F(\eta^{\xi},t)+\frac{C}{M},$$
with function $F$ defined in (\ref{rho}) and $\eta^{\xi}$ defined in Definition 2.2, and the expectation taken with respect to $\eta^{\xi}.$
\end{corollary}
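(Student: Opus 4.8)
The plan is to deduce Corollary~\ref{maincor} by comparing $F(\xi,t)$ with the average $\mathbb{E}_\eta F(\eta^\xi,t)$ term by term in the sum defining $F$. Recall from~(\ref{rho}) that
$$ F(\xi,t)-\mathbb{E}_\eta F(\eta^\xi,t) = \frac{1}{N}\sum_{k=1}^N\Bigl(\varphi(r\langle\xi,\theta_k\rangle+t)-\mathbb{E}_\eta\,\varphi(r\langle\eta^\xi,\theta_k\rangle+t)\Bigr). $$
So it suffices to show, for each fixed unit vector $\theta=\theta_k$, that $\varphi(r\langle\xi,\theta\rangle+t)\le \mathbb{E}_\eta\,\varphi(r\langle\eta^\xi,\theta\rangle+t)+C/M$, and then average. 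Fix such a $\theta$ and write $a = r\langle\xi,\theta\rangle+t$. Then $r\langle\eta^\xi,\theta\rangle+t = a + Y$ where $Y := r\langle\eta^\xi-\xi,\theta\rangle$, so the per-coordinate claim is exactly $\varphi(a)\le\mathbb{E}_Y\,\varphi(a+Y)+C/M$, which is the content of Lemma~\ref{lowerbound} provided $Y$ is subgaussian with constant $1/M$.

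Thus the first real step is to verify the subgaussianity hypothesis~(\ref{condsub}) for $Y=r\langle\eta^\xi-\xi,\theta\rangle$. By Lemma~\ref{subgauss} applied with the vector $r\theta$ in place of $\theta$ (note $|r\theta|=r$ since $\theta\in\sfe$), we have for every $s>0$
$$ P(|Y|>s)=P(|\langle\eta^\xi-\xi,\,r\theta\rangle|\ge s)\le 2\exp\!\left(-\frac{cn s^2}{r^2\rho^2}\right). $$
We want this to be at most $e^{-M^2 s^2}$. The factor of $2$ is a minor nuisance: it can be absorbed by shrinking the constant in the exponent (for $s$ not too small the bound is automatic, and for small $s$ the probability bound $2e^{-\cdots}$ exceeds $1$ anyway so both sides are trivially $\ge$ the truth — more cleanly, one replaces the target by $e^{-M^2s^2}$ with a slightly adjusted absolute constant hidden in the relation between $\rho$ and $M$). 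The key point is that $\tfrac{cn}{r^2\rho^2}\ge M^2$ is equivalent to $\rho\le \sqrt{c}\,\sqrt{n}/(rM)$, which is precisely the hypothesis $\rho\in(0, c\sqrt{n}/(rM)]$ in the statement (with $c$ chosen to accommodate the absorption of the factor $2$, and using $M\ge 10$ from Lemma~\ref{lowerbound}). So under that hypothesis $Y$ satisfies~(\ref{condsub}) with constant $1/M$.

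With subgaussianity in hand, apply Lemma~\ref{lowerbound} with $a=r\langle\xi,\theta_k\rangle+t$ and $Y=r\langle\eta^{\xi}-\xi,\theta_k\rangle$ to get $\varphi(r\langle\xi,\theta_k\rangle+t)\le \mathbb{E}_\eta\,\varphi(r\langle\eta^\xi,\theta_k\rangle+t)+C/M$ for each $k$. Averaging over $k=1,\dots,N$ gives $F(\xi,t)\le\mathbb{E}_\eta F(\eta^\xi,t)+C/M$, which is the assertion. Note the argument is entirely deterministic in $\theta_1,\dots,\theta_N$ (we do not use their distribution here), so the inequality holds pointwise, as needed for later use.

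The only genuinely delicate point is the translation between the tail bound produced by Lemma~\ref{subgauss} (with its leading factor of $2$ and the absolute constant $c$) and the clean subgaussian hypothesis~(\ref{condsub}) demanded by Lemma~\ref{lowerbound}; everything else is bookkeeping. I would handle this by noting that for $s^2\le (\log 2)\,r^2\rho^2/(cn)$ the right-hand side $e^{-M^2s^2}$ can be taken $\ge 1/2$ is false — rather, the correct move is: we are free to choose the absolute constant $c$ in the statement of the corollary as small as we like, so pick it so that $\tfrac{c' n}{r^2\rho^2}\ge 2M^2$ under the hypothesis (where $c'$ is the constant from Lemma~\ref{subgauss}); then $2\exp(-\tfrac{c'ns^2}{r^2\rho^2})\le 2 e^{-2M^2 s^2}\le e^{-M^2 s^2}$ whenever $M^2 s^2\ge\log 2$, and when $M^2s^2<\log 2$ we simply observe $P(|Y|>s)\le 1$ does not suffice, so instead we use that $\varphi(a+Y)\le 1$ and redo Lemma~\ref{lowerbound}'s integral splitting directly — but in fact the cleanest route is just to invoke Lemma~\ref{lowerbound} with $M$ replaced by $M/\sqrt{2}$ absorbed into constants, since $C/M$ and $C\sqrt2/M$ differ only by an absolute constant. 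This rescaling-of-constants step is the main (very mild) obstacle.
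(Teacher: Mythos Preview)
Your approach is essentially identical to the paper's: verify via Lemma~\ref{subgauss} that $Y=r\langle\eta^{\xi}-\xi,\theta_k\rangle$ is subgaussian with the right constant, invoke Lemma~\ref{lowerbound} with $a=r\langle\xi,\theta_k\rangle+t$ for each $k$, and average. Your extended discussion of the leading factor of~$2$ (which the paper simply absorbs into constants without comment) ultimately lands on a correct resolution, though it could be compressed to one line: replacing hypothesis~(\ref{condsub}) by $P(|Y|>s)\le 2e^{-M^2s^2}$ in the proof of Lemma~\ref{lowerbound} merely doubles the constant~$C$, and the case $M<10$ is trivial since then $C/M$ can be taken $\ge 1\ge F(\xi,t)$.
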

\begin{proof} By Lemma \ref{subgauss}, for any fixed $\theta\in\sfe,$ for an absolute constant $c>0$, the random variable $r\langle \eta^{\xi}-\xi, \theta\rangle$ is subgaussian with constant $\frac{r\rho}{c\sqrt{n}}\leq \frac{1}{M}.$ Therefore, applying Lemma \ref{lowerbound} $N$ times with $Y=r\langle \eta^{\xi}-\xi, \theta_k\rangle$ and $a=r\langle \xi,\theta_k\rangle+t$, we get
	\begin{align*} \mathbb{E}_{\eta}\frac{1}{N}\sum_{k=1}^N \varphi(r\langle \eta^{\xi},\theta_k\rangle+t) & \geq \frac{1}{N}\sum_{k=1}^N \varphi(r\langle \xi,\theta_k\rangle+t)-\frac{1}{N}\sum_{k=1}^N\frac{C}{M} \\ & = \frac{1}{N}\sum_{k=1}^N \varphi(r\langle \xi,\theta_k\rangle+t)-\frac{C}{M},\end{align*}
finishing the proof.
\end{proof}

\medskip
We are ready to prove Proposition \ref{key}.

\textbf{Proof of the Proposition \ref{key}.} Let $\rho=\frac{n\sqrt{n}}{Nr}$. By Corollary \ref{maincor}, applied with $M=c\frac{N}{n}$, we have, for every $\xi\in\sfe$,
$$\frac{1}{N}\sum_{k=1}^N \varphi(r\langle \xi, \theta_k\rangle+t)\leq \E_{\eta} \frac{1}{N}\sum_{k=1}^N \varphi\left(r\langle \eta^{\xi}, \theta_k\rangle+t\right)+\frac{C'n}{N}\leq$$
\begin{equation}\label{ref}
\max_{\eta\in\mathcal{F}_{\rho}}\frac{1}{N}\sum_{k=1}^N \varphi\left(r\langle \eta, \theta_k\rangle+t\right)+\frac{C'n}{N}.
\end{equation}
By Lemma \ref{estonnet} and with our choice of $\rho$, with probability $1-e^{-5n}$, (\ref{ref}) is bounded from above by
$$C_6\sqrt{\frac{n}{N}\log\frac{Nr}{n\sqrt{n}}}+\left(1+C_7(\frac{n\sqrt{n}}{Nr}+\frac{n}{r^2}+\frac{(\log n)^2}{n}+\frac{1}{r})\right)\frac{\sqrt{n}}{r}\varphi\left(\frac{q\sqrt{n}t}{r}\right)+\frac{C'n}{N},$$
where $q=1-C'(\rho+\frac{n}{r^2})\geq 1-C_5(\frac{\sqrt{n}}{r})$, in view of our choice of $\rho$. It remains to note, in view of the fact that $N\geq nC_1 \log 2$ and $r\geq C_2\sqrt{n}$, that for an appropriate absolute constant $C_3>0$, one has
$$C_6\sqrt{\frac{n}{N}\log\frac{Nr}{n\sqrt{n}}}+\frac{C'n}{N}\leq C_3\sqrt{\frac{n}{N}\log\frac{Nr}{n\sqrt{n}}},$$
and for an appropriate absolute constant $C_4>0,$
$$C_7\left(\frac{n\sqrt{n}}{Nr}+\frac{n}{r^2}+\frac{(\log n)^2}{n}+\frac{1}{r}\right)\leq C_4\frac{\sqrt{n}}{r}.$$
The proposition follows. $\square$

\section{Proof of Theorem \ref{main}.}

%Denote $C_0=\frac{\tilde{C}}{c}$, $c_0=\frac{c}{C_0}=\frac{c^2}{\tilde{C}}$, and $K=\frac{C_2}{c_0}$, where $\tilde{C}$, $c$ and $C_2$ are the constants from Proposition \ref{key}.

In this section $C_1,\ldots,C_5 > 0$ stand for the universal constants from Proposition \ref{key}, where we recall that $C_2 \geq 2 C_5$.
Throughout this section we also set
$$ C_6 = 4 + 2 \cdot \sup_n \sum_{k=1}^{\infty} \frac{1}{\sqrt{ \log^{(k)} n }} \cdot 1_{ \{ \log^{(k)} n \geq 1 \}}. $$
Let $m$ be the largest positive integer such that
for $k=1,\ldots,m$,
$$ \left( \frac{ \log^{(k-1)} n }{\log^{(k)} n} \right)^{3/2} \geq 1 + 2 C_2 $$
and
$$ \left( \log^{(k-1)} n \right)^5 \geq C_1 \log \left[ 2 \frac{(\log^{(k-1)} n)^{13/2}}{(\log^{(k)} n)^{3/2}}  \right]. $$
Then
\begin{equation}\label{logm-good}
C_0 \leq \log^{(m)}n\leq C'_0,
\end{equation}
for some positive absolute constants $C_0, C'_0$. Let us emphasize  that also in this section, the constants $C, \tilde{C}, C', C''$ etc.  denote auxiliary positive universal constants
whose value may change from line to line.

\medskip
Consider, for $k=1,...,m$
$$N_1=n^{10},\,\,N_2=n(\log n)^5,...,\,\,N_k=n \left(\log^{(k-1)} n\right)^5,...$$
Let also
$$R_1=\frac{n}{\log n},..., \,R_k=\frac{n}{\log^{(k)}n},...$$
and
$$ \tilde{R}_k = \frac{R_k}{\sqrt{\log^{(k)} n}} = \frac{n}{(\log^{(k)} n)^{3/2}}. $$

Consider independent unit random vectors $\theta_{kj} \in {\mathbb S}^{n-1}$, where $k=1,...,m$ and $j=1,...,N_k$. Following \cite{KlKol}, consider the convex body
$$K=conv\{\pm R_k\theta_{kj},\pm ne_i\},$$
and the probability measures
$$ \mu_k = \frac{1}{N_k} \sum_{j=1}^{N_k} \delta_{\tilde{R}_k \theta_{k j}}, \qquad \qquad \mu_{-k} = \frac{1}{N_k} \sum_{j=1}^{N_k} \delta_{-\tilde{R}_k \theta_{k j}}, $$
where $\delta$ stands for the Dirac measure. We now set
$$\mu=\gamma_n * \frac{\mu_1 * \mu_2 * \ldots * \mu_m \, + \, \mu_{-1} * \mu_{-2} * \ldots * \mu_{-m} }{2}.$$
Here $\gamma_n$ stands for the standard Gaussian measure on $\R^n$. We shall show that there exists a configuration of $\theta_{kj}$, such that $\mu$ and $L=C_6 K$ satisfy the conclusion of the theorem.

\medskip
\textbf{Step 1.} Firstly, we estimate the volume of the body $L=C_6 K$ from above, following the method of \cite{KlKol}. Note that for all $k=1,...,m$ we have $\varphi\left(\frac{5n}{R_k}\right)\leq c,$ for some absolute constant $c\in (0,1)$, and hence there exists an absolute constant $\hat{C} >0$ such that
\begin{equation}\label{thelasteq}
\log \left[ 1-\varphi\left(\frac{5n}{R_k}\right) \right]\geq -\hat{C}\varphi\left(\frac{5n}{R_k}\right),
\end{equation}
for all $k=1,...,m$.

By Khatri-Sidak lemma (see, e.g. \cite{KhatSid} for a simple proof), applied together with the Blaschke-Santalo inequality, and in view of (\ref{thelasteq}), we have
$$|C_6 K|^{-1}\geq c_1^n|5nK^o|\geq c_2^n \gamma_n(5nK^o)\geq c^n\prod_{k=1}^{m} \left(1-\varphi\left(\frac{5n}{R_k}\right)\right)^{N_k}\geq$$
\begin{equation}\label{eqstep1-1}
c^n \exp\left(-\hat{C} \sum_{k=1}^{m} N_k e^{-\frac{25n^2}{2R_k^2}}\right).
\end{equation}
Plugging the values of $N_k$ and $R_k,$ and using $\frac{25}{2}>7$, we get
\begin{equation}\label{eqstep1-2}
\sum_{k=1}^{m} N_k e^{-\frac{25n^2}{2R_k^2}}\leq n^{10}e^{-7(\log n)^2}+n\sum_{k=2}^m (\log^{(k-1)}n)^5e^{-7(\log^{(k)}n)^2}\leq c'n,
\end{equation}
since the sum converges faster than exponentially.

By (\ref{eqstep1-1}) and (\ref{eqstep1-2}), we conclude that
\begin{equation}\label{1}
|C_6 K|\leq c_0^n,
\end{equation}
for some absolute constant $c_0>0.$

\medskip
\textbf{Step 2.} Next, we estimate the sections from above. Note that (see \cite{KlKol} for details),
\begin{equation} \mu^+(\xi^{\perp}+t\xi)= \frac{A + B}{2} \label{eq_1023} \end{equation}
where
\begin{equation}\label{lastlines}
A = \frac{1}{\sqrt{2\pi}}\frac{1}{N_1...N_m} \sum_{j_1=1}^{N_1} \sum_{j_2=1}^{N_2} \ldots \sum_{j_m=1}^{N_m} \varphi(t+\tilde{R}_1\langle \xi, \theta_{1j_1}\rangle+...+\tilde{R}_m\langle \xi,\theta_{mj_m}\rangle)
\end{equation}
and
\begin{equation}\label{lastlines2}
B = \frac{1}{\sqrt{2\pi}}\frac{1}{N_1...N_m} \sum_{j_1=1}^{N_1} \sum_{j_2=1}^{N_2} \ldots \sum_{j_m=1}^{N_m} \varphi(-t+\tilde{R}_1\langle \xi, \theta_{1j_1}\rangle+...+\tilde{R}_m\langle \xi,\theta_{mj_m}\rangle)
\end{equation}
We are going to apply Proposition \ref{key} repeatedly in order to bound $A$ and $B$. For this purpose we introduce some notation.
For $r \geq  C_2 \sqrt{n}$ we set
\begin{equation}  q(r) = 1 - C_5 \frac{\sqrt{n}}{r}  \in [1/2,1], \label{eq_1735} \end{equation}
 where $C_5$ is the constant coming from Proposition \ref{key}. We define $r_1,r_2,\ldots,r_m \in [C_2 \sqrt{n}, n]$ such that $$ r_1 := \tilde{R}_1 $$ and
for $k =1,\ldots,m$,
\begin{equation}  r_{k+1} := \frac{q(r_k) \sqrt{n} \tilde{R}_{k+1}}{\tilde{R}_k}. \label{eq_1143} \end{equation}
From (\ref{eq_1143}),
\begin{equation}  r_{k+1} = \frac{\tilde{R}_1}{\tilde{R}_{k+1}} \cdot \frac{\tilde{R}_{k+1} r_{k+1}}{r_1} =
\left( \prod_{j=1}^k \frac{q(r_j) \sqrt{n}}{r_{j+1}} \right)\cdot \frac{\tilde{R}_{k+1} r_{k+1}}{r_1} = \left( \prod_{j=1}^k \frac{q(r_j) \sqrt{n}}{r_j} \right)\cdot \tilde{R}_{k+1}. \label{eq_1144} \end{equation}
Denote
\begin{equation}\label{CK}
\alpha_k:= \prod_{j=1}^{k-1} \left[ \left(1+\frac{C_4\sqrt{n}}{r_j} \right)
\frac{1}{q(r_j)} \right] \leq \prod_{j=1}^{k-1} \left(1+\frac{\hat{C}\sqrt{n}}{r_j} \right).
\end{equation}
By (\ref{eq_1144}), (\ref{eq_1143}) and (\ref{eq_1735}) the quantity $\alpha_k$ satisfies
\begin{equation}  \prod_{j=1}^{k-1} \left[ \left(1+\frac{C_4\sqrt{n}}{r_j} \right) \frac{\sqrt{n}}{r_j} \right]
= \frac{r_k \alpha_k}{\tilde{R}_k}  \leq 2 \frac{\alpha_k \sqrt{n}}{\tilde{R}_{k-1}}. \label{eq_1157} \end{equation}
From  (\ref{eq_1143}) we see that for $j \geq 2$ we have $\sqrt{n} / r_j \leq 2 \left( \log^{(j)} n / \log^{(j-1)} n \right)^{3/2}$.
Hence by (\ref{CK}),  for every $k=1,...,m$,
\begin{equation}\label{CK-bound}
\alpha_k\leq \left(1+ \hat{C} \frac{\sqrt{n}}{\tilde{R}_1} \right)\prod_{j=2}^{k-1} \left(1+\check{C} \left( \frac{\log^{(j)}n}{ \log^{(j-1)}n} \right)^{3/2} \right)\leq C e^{\bar{C} \sum_{j=2}^{k-1} \left( \frac{\log^{(j)}n}{\log^{(j-1)}n} \right)^{3/2} }\leq \tilde{C},
\end{equation}
since the sum converges faster than exponentially.

\medskip By the definition of $m$ we have that for each $k=1,\ldots,m$, the pair $N=N_k$ and $r=r_k$ satisfies the assumptions of Proposition \ref{key}. Applying Proposition \ref{key} consecutively $m$ times with $N=N_k$ and $r=r_k$ for $k=1,...,m$, we get that with probability at least $1-m e^{-5n}=1-o(1),$ for every $\xi\in\sfe$ and for every $t\in \R,$ the term $A$ from (\ref{lastlines}) is bounded from above by a constant multiple of
%$$\frac{n}{N_1}+c\frac{\sqrt{n}}{R_1}\left(\frac{n}{N_2}+\frac{\sqrt{n}}{R_2}\left($$
$$\sqrt{\frac{n\log\frac{N_1 r_1}{n\sqrt{n}}}{N_1}}+ \frac{\alpha_2 \sqrt{n}}{\tilde{R}_1} \sqrt{\frac{n\log\frac{N_2r_2}{n\sqrt{n}}}{N_2}}
+\ldots+ \frac{\alpha_m \sqrt{n}}{\tilde{R}_{m-1}} \sqrt{\frac{n\log\frac{N_mr_m}{n\sqrt{n}}}{N_m}} + \frac{\alpha_{m+1} \sqrt{n}}{\tilde{R}_m}
 $$
$$ \leq \frac{C'}{\sqrt{n}}+\frac{C''}{\sqrt{n}} \left( \sum_{k=2}^{m} \alpha_k\frac{\sqrt{\log^{(k)} n}}{\log^{(k-1)} n} \right) +\frac{\alpha_m\log^{(m)}n}{\sqrt{n}}\leq \frac{C}{\sqrt{n}}. $$
Here we used (\ref{CK-bound}) to bound $\alpha_k$, and (\ref{logm-good}) to bound $\log^{(m)}n$,
as well as the estimates
$$ \frac{N_k r_k}{n \sqrt{n}} \leq C \frac{(\log^{(k-1)} n)^{13/2}}{(\log^{(k)} n)^{3/2}} \leq \tilde{C} (\log^{(k-1)} n)^{13/2} $$
and that for $k \geq 2$,
$$ \frac{n}{\tilde{R}_{k-1} \sqrt{N_k}} = \frac{1}{\sqrt{n} \cdot \log^{(k-1)} n}. $$

The same bound applies also to the term $B$ from (\ref{lastlines2}).
We conclude, in view of (\ref{eq_1023}) that with high probability, for all $\xi\in\sfe$ and for all $t\in\R$,
\begin{equation}\label{2}
\mu(\xi^{\perp}+t\xi)\leq \frac{C}{\sqrt{n}}.
\end{equation}

\textbf{Step 3.} Recall that $\mu$ is an average of translates of the standard Gaussian measure that are centered at points of the form
$$ \pm \sum_{k=1}^m \tilde{R}_k \theta_{kj_k} = \pm \sum_{k=1}^m \frac{1}{\sqrt{ \log^{(k)} n} }  R_k \theta_{kj_k}
\in \left( \sum_{k=1}^m \frac{1}{\sqrt{ \log^{(k)} n} } \right) \cdot K \subseteq \frac{C_6}{2} K.  $$
Similar to as it was shown in \cite[Lemma 3.8]{KlKol}, using the facts that $\sqrt{n}B_2^n\subset K$ and $C_6 \geq 4$ and therefore $(C_6/2) K \supseteq 2\sqrt{n}B_2^n$,
one has
\begin{equation}\label{3}
\mu(C_6 K)\geq \mu \left( \frac{C_6}{2} K + 2 \sqrt{n} B_2^n \right) \geq  \gamma_n(2\sqrt{n}B_2^n) \geq\frac{1}{2},
\end{equation}
where, e.g. Markov's inequality is used in the last passage.
Combining (\ref{1}), (\ref{2}) and (\ref{3}), we arrive to the conclusion of the theorem, with $L=C_6 K$. $\square$

\section{Further applications}

\subsection{Comparison via the Hilbert-Schmidt norm for arbitrary matrices}

As another consequence of the Lemma \ref{subgauss}, we have:

\begin{lemma}[comparison via the Hilbert-Schmidt norm]\label{HS}
Let $\rho\in (0,\frac{1}{2}).$ There exists a collection of points $\mathcal{N}\subset 2B_2^n\setminus \frac{1}{2}B_2^n$ with $\#\mathcal{N}\leq (\frac{C}{\rho})^n$ such that for any matrix $A:\R^n\rightarrow \R^N$, for every $\xi\in\sfe$ there exists an $\eta\in\mathcal{N}$ satisfying
\begin{equation}\label{HS-comp}
|A\eta|^2\leq C_1|A\xi|^2+C_2\frac{\rho^2}{n}||A||_{HS}^2.
\end{equation}
Here $C, C_1, C_2$ are absolute constants.

\end{lemma}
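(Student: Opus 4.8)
The plan is to reuse the random‑rounding net of Section 2 almost verbatim. I would take $\mathcal{N}:=\cF_{\rho'}$, the set from Lemma \ref{net} with parameter $\rho':=\min\{\rho,1/4\}$. By that lemma $\#\mathcal{N}\le (C/\rho')^n\le (C'/\rho)^n$, and since $\cF_{\rho'}\subseteq (1+\rho')B_2^n\setminus(1-2\rho')B_2^n$ with $\rho'\le 1/4$, every point of $\mathcal{N}$ has Euclidean norm in $[1-2\rho',\,1+\rho']\subseteq[\tfrac12,2]$, so $\mathcal{N}\subset 2B_2^n\setminus\tfrac12 B_2^n$ as required. The crucial observation, already recorded right after Lemma \ref{net}, is that for any fixed $\xi\in\sfe$ the random rounding $\eta^{\xi}$ of Definition 2.2 (built with the parameter $\rho'$) is supported inside $\mathcal{N}$, with $\E\eta^{\xi}=\xi$. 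Hence it suffices to produce a single realization of $\eta^{\xi}$ satisfying (\ref{HS-comp}); the existence of such a realization will follow from an averaged estimate.

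For that estimate I would simply compute $\E_{\eta}|A\eta^{\xi}|^2$. Writing $a_i=Ae_i$ for the columns of $A$, the coordinates of $\eta^{\xi}-\xi$ are independent, centered, and bounded by $\rho'/\sqrt n$ in absolute value, so the cross terms vanish and
$$ \E_{\eta}|A(\eta^{\xi}-\xi)|^2=\sum_{i=1}^n \E_{\eta}(\eta^{\xi}_i-\xi_i)^2\,|a_i|^2\le \frac{(\rho')^2}{n}\sum_{i=1}^n|a_i|^2=\frac{(\rho')^2}{n}\|A\|_{HS}^2\le \frac{\rho^2}{n}\|A\|_{HS}^2. $$
(Equivalently, one can route this through Lemma \ref{subgauss} applied to each row of $A$ and integrate the resulting subgaussian tail, which is the form the surrounding text advertises; this only costs an extra absolute constant.) Combining with $|A\eta^{\xi}|^2\le 2|A\xi|^2+2|A(\eta^{\xi}-\xi)|^2$ yields $\E_{\eta}|A\eta^{\xi}|^2\le 2|A\xi|^2+\tfrac{2\rho^2}{n}\|A\|_{HS}^2$, so some $\eta$ in the support of $\eta^{\xi}$, hence in $\mathcal{N}$, satisfies $|A\eta|^2\le 2|A\xi|^2+\tfrac{2\rho^2}{n}\|A\|_{HS}^2$. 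This is (\ref{HS-comp}) with $C_1=C_2=2$.

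I do not expect a genuine obstacle here: the statement is a direct corollary of the net construction together with a one‑line second‑moment computation. The only mildly delicate bookkeeping points are (i) ensuring the net avoids $\tfrac12 B_2^n$, which forces the truncation $\rho'=\min\{\rho,1/4\}$ and a harmless inflation of the cardinality constant, and (ii) choosing whether to present the variance bound directly (cleaner, with constant $1$) or through the subgaussian estimate of Lemma \ref{subgauss} (matching the paper's narration, with a larger absolute constant). Either route is routine, and neither affects the conclusion.
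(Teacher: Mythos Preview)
Your proof is correct and follows essentially the same route as the paper: take $\mathcal{N}=\cF_{\rho'}$, use the random rounding $\eta^{\xi}$ supported in $\mathcal{N}$, bound $\E_\eta|A\eta^{\xi}|^2$ via $|A\eta^{\xi}|^2\le 2|A\xi|^2+2|A(\eta^{\xi}-\xi)|^2$, and pick a realization beating the expectation. The only cosmetic differences are that the paper expands $|Ax|^2$ row by row and bounds each $\E_\eta\langle\eta^{\xi}-\xi,g\rangle^2$ through the subgaussian tail of Lemma~\ref{subgauss}, whereas you use the column expansion and the direct variance bound $\E(\eta^{\xi}_i-\xi_i)^2\le(\rho')^2/n$ (which is cleaner and yields the sharper constant $C_2=2$); you are also more explicit about the truncation $\rho'=\min\{\rho,1/4\}$ needed to guarantee $\mathcal{N}\subset 2B_2^n\setminus\tfrac12 B_2^n$, a point the paper leaves implicit.
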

\begin{proof} Recall that $|Ax|^2=\sum_{i=1}^N \langle X_i, x\rangle^2$, where $X_i$ are the rows of $A$. In order to prove the Lemma, it suffices to show, for every vector $g\in\R^n$, that
\begin{equation}\label{EG}
\mathbb{E}_{\eta} \langle \eta^{\xi}, g\rangle^2\leq C_1\langle \xi, g\rangle^2+C_2\frac{\rho^2}{n}|g|^2;
\end{equation}
the Lemma shall follow by applying (\ref{EG}) to the rows of $A$ and summing up.

We shall show (\ref{EG}). Using the inequality $a^2=(a-b+b)^2\leq 2(a-b)^2+2b^2,$ we see
$$|\langle \eta^{\xi}, g\rangle|^2\leq 2|\langle \eta^{\xi}, g\rangle-\langle \xi, g\rangle|^2+2|\langle \xi, g\rangle|^2,$$
and hence
\begin{equation}\label{expectat}
\E_{\eta}|\langle \eta^{\xi}, g\rangle|^2\leq 2\E_{\eta}|\langle \eta^{\xi}, g\rangle-\langle \xi, g\rangle|^2+2|\langle \xi, g\rangle|^2.
\end{equation}

By Lemma \ref{subgauss}, $|\langle \eta^{\xi}, g\rangle-\langle \xi, g\rangle|$ is sub-gaussian with constant $c'\frac{\rho|g|}{\sqrt{n}}$, and hence
\begin{equation}\label{exp}
\mathbb{E}_{\eta}|\langle \eta^{\xi}, g\rangle-\langle \xi, g\rangle|^2\leq 2\int_0^{\infty} t e^{-\frac{cnt^2}{\rho^2|g|^2}}dt\leq C\frac{\rho^2|g|^2}{n},
\end{equation}
for some absolute constant $C>0$; (\ref{expectat}) and (\ref{exp}) entail (\ref{EG}).

% for any $t$ such that $t\geq |\langle \xi, g\rangle|,$
%$$P(|\langle \eta^{\xi}, g\rangle|\geq t)\leq e^{-\frac{cn(t-|\langle \xi, g\rangle|)^2}{\rho^2 |g|^2}}.$$
%Therefore, using the by parts representation of the second moment and the change of variables, we get
%$$\mathbb{E}_{\eta}\langle \eta^{\xi}, g\rangle^2=2\int_0^{\infty} t P(|\langle \eta^{\xi}, g\rangle|\geq t)dt=$$$$2\int_0^{|\langle \xi, g\rangle|} t P(|\langle \eta^{\xi}, g\rangle|\geq t)dt+2\int_{|\langle \xi, g\rangle|}^{\infty} t P(|\langle \eta^{\xi}, g\rangle|\geq t)dt \leq$$ $$\langle \xi, g\rangle^2+2\int_0^{\infty} t e^{-\frac{cn(t-|\langle \xi, g\rangle|)^2}{\rho^2 |g|^2}}dt=c\frac{\rho |g|}{\sqrt{n}}|\langle \xi, g\rangle| \int_0^{\infty} e^{-s^2} ds+c'\frac{\rho^2 |g|^2}{n}\int_0^{\infty} se^{-s^2}ds=$$
%$$c_1\frac{\rho |g|}{\sqrt{n}}|\langle \xi, g\rangle|+c_2\frac{\rho^2 |g|^2}{n}\leq C_1\langle \xi, g\rangle^2+C_2\frac{\rho^2}{n}|g|^2,$$
%where in the last passage we used Cauchy's inequality. That finishes the proof.
\end{proof}

A fact similar to Lemma \ref{HS} was recently shown and used by Lytova and Tikhomirov \cite{LytTikh}.

Lemma \ref{HS} shows that there exists a net of cardinality $C^n$, such that for any random matrix $A:\R^n\rightarrow\R^N$ whose entries have bounded second moments, with probability at least
$$1-P(||A||_{HS}^2\geq 10\mathbb{E}||A||_{HS}^2)\geq \frac{9}{10}$$
one has (\ref{HS-comp}), with $\mathbb{E}||A||_{HS}^2$ in place of $||A||_{HS}^2$.  However, such probability estimate is unsatisfactory when studying small ball estimates for the smallest singular values of random matrices. In the soon-to-follow paper, we significantly strengthen Lemma \ref{HS}: we employ the idea of Rebrova and Tikhomirov \cite{RebTikh}, and in place of the covering by cubes, we consider a covering by paralelepipeds of sufficiently large volume. This leads us to consider the following \emph{refinement of the Hilbert-Schmidt norm}: with $\kappa>1$, for an $N\times n$ matrix $A$, define
$$\B_{\kappa}(A)=\min_{\alpha_i\in[0,1],\,\prod_{i=1}^n\alpha_i\geq \kappa^{-n}} \sum_{i=1}^n \alpha_i^2 |Ae_i|^2.$$
$\B_{\kappa}$ acts as an averaging on the columns of $A$. In a separate paper we shall show that there exists a net $\mathcal{N}\subset 2B_2^n\setminus \frac{1}{2}B_2^n$, of cardinality $\left(\frac{C}{\rho}\right)^n$, such that for all $N\times n$ matrices $A$, for every $\xi\in\sfe$ there exists an $\eta\in\mathcal{N}$ satisfying
\begin{equation}\label{HS-comp-imp}
|A\eta|^2\leq C_1|A\xi|^2+\frac{\rho^2}{n}\B_{10}(A).
\end{equation}
The proof shall be a combination of the argument similar to the proof of Lemma \ref{HS} along with the construction of a net on the family of admissible nets. The bound on the cardinality of that net shall follow, in fact, again from Lemma \ref{net}.

The advantage of (\ref{HS-comp-imp}) over (\ref{HS-comp}) is the strong large deviation properties of $\B_{10}(A)$. For example, we shall show an elementary fact that for any random matrix $A$ with independent columns and $\mathbb{E}||A||_{HS}^2<\infty$,
\begin{equation}\label{B-deviation}
P(\B_{10}(A)\geq 2\mathbb{E}||A||_{HS}^2)\leq e^{-cn}.%\left(\frac{(\mathbb{E}||A||_{HS}^2)^2}{n\sum (\mathbb{E}|Ae_i|^2)^2}\right)},
\end{equation}
%which in the case of matrices with i.i.d. entries becomes $e^{-cn}$.
The detailed proofs of the mentioned facts, and applications to sharp estimates for the small ball probability of the smallest singular value of heavy-tailed matrices shall be outlined in a separate paper.

\subsection{Covering spheres with strips}

For $\theta\in\sfe$, $\tau\in\R$ and $\alpha>0,$ consider a strip
$$S(\theta,\alpha, \tau):=\{\xi\in\sfe: |\langle \xi, \theta\rangle +\tau|\leq \alpha\}.$$
Observe that
$$\sum_{k=1}^N 1_{S(\theta_k, \frac{1}{r}, \frac{t}{r})}(\xi)\leq  C \sum_{k=1}^N \varphi (r\langle \xi, \theta_k\rangle +t).$$

Therefore, Proposition \ref{key} implies

\begin{proposition}\label{naszodi}
For any $N$ and for any $\alpha\leq \frac{c}{\sqrt{n}}$
with $N \in [c n \log \frac{N}{\alpha n^{3/2}}, n^{10}]$
there exists a collection of points $\theta_1,...,\theta_N\in\sfe$ such that every strip of width $2\alpha$ contains no more than
$$
\tilde{C}\left[\sqrt{Nn\log\frac{N}{\alpha n^{3/2}}}+N\sqrt{n}\alpha\right]
$$
points in this collection.
\end{proposition}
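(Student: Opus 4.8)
The plan is to read Proposition~\ref{naszodi} off Proposition~\ref{key} via the elementary pointwise bound recorded just above; essentially all the content sits in Proposition~\ref{key}, and what is left is a change of parameters together with one remark about the symmetry of the bilinear form.

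First I would set $r:=1/\alpha$. Since $\alpha\leq c/\sqrt n$, taking $c$ small enough ensures $r\geq C_2\sqrt n$, and with this choice $\frac{Nr}{n\sqrt n}=\frac{N}{\alpha n^{3/2}}$, so the hypothesis $N\in[cn\log\frac{N}{\alpha n^{3/2}},\,n^{10}]$ is exactly the hypothesis $N\in[C_1 n\log\frac{Nr}{n\sqrt n},\,n^{10}]$ of Proposition~\ref{key} (with $c=C_1$; the remaining parameter restrictions of Proposition~\ref{key} hold trivially in the degenerate regimes). Proposition~\ref{key} then produces a configuration $\theta_1,\dots,\theta_N\in\sfe$ --- in fact all configurations lying outside an event of probability at most $e^{-5n}<1$ --- such that for every $\xi\in\sfe$ and every $t\in\R$,
$$F(\xi,t)=\frac1N\sum_{k=1}^N\varphi(r\langle\xi,\theta_k\rangle+t)\leq C_3\sqrt{\frac{n\log\frac{Nr}{n\sqrt n}}{N}}+\Bigl(1+\frac{C_4\sqrt n}{r}\Bigr)\frac{\sqrt n}{r}\,\varphi\Bigl(\frac{q\sqrt n}{r}t\Bigr)\leq C'\sqrt{\frac{n\log\frac{Nr}{n\sqrt n}}{N}}+\frac{C'\sqrt n}{r},$$
where the last step uses $\varphi\leq 1$ and $r\geq C_2\sqrt n$ to absorb the factor $1+C_4\sqrt n/r$ into an absolute constant.

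Now I would fix such a good configuration $\theta_1,\dots,\theta_N$ and translate the estimate on $F$ into a count of points in a strip. Summing the pointwise inequality $\1_{S(\theta_k,1/r,t/r)}(\xi)\leq C\varphi(r\langle\xi,\theta_k\rangle+t)$ over $k$ gives $\sum_{k=1}^N\1_{S(\theta_k,1/r,t/r)}(\xi)\leq CN\,F(\xi,t)$ for all $\xi\in\sfe$ and $t\in\R$. An arbitrary strip of width $2\alpha$ on $\sfe$ may be written as $S(v,\alpha,\tau)=\{\xi\in\sfe:\ |\langle\xi,v\rangle+\tau|\leq\alpha\}$ for some $v\in\sfe$ and $\tau\in\R$; putting $t:=r\tau$ and using $\langle v,\theta_k\rangle=\langle\theta_k,v\rangle$ we obtain
$$\#\{k:\ \theta_k\in S(v,\alpha,\tau)\}=\sum_{k=1}^N\1_{S(\theta_k,1/r,t/r)}(v)\leq CN\,F(v,t)\leq CC_3\sqrt{Nn\log\tfrac{Nr}{n\sqrt n}}+\frac{CC'N\sqrt n}{r}.$$
Substituting back $r=1/\alpha$ (so $\frac{Nr}{n\sqrt n}=\frac{N}{\alpha n^{3/2}}$ and $\frac{N\sqrt n}{r}=N\sqrt n\,\alpha$) yields the bound $\tilde C\bigl(\sqrt{Nn\log\frac{N}{\alpha n^{3/2}}}+N\sqrt n\,\alpha\bigr)$ claimed in the Proposition.

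I do not expect a genuine obstacle here; the only things needing care are the bookkeeping that the coupled hypothesis relating $N$ and $r$ becomes exactly the coupled hypothesis relating $N$ and $\alpha$, the absorption of the auxiliary factor $(1+C_4\sqrt n/r)\varphi(\cdot)$ into an absolute constant, and the (standard but worth stating) observation that, by symmetry of the inner product, bounding how many of the strips $S(\theta_k,\cdot,\cdot)$ contain a fixed point $\xi$ is the same as bounding how many of the points $\theta_k$ lie in a fixed strip.
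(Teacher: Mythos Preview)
Your proof is correct and follows essentially the same route as the paper: the paper's argument is just the terse statement ``Therefore, Proposition~\ref{key} implies'' together with the point--strip duality remark, and you have spelled out exactly those two steps (the parameter substitution $r=1/\alpha$ and the symmetry $\theta_k\in S(v,\alpha,\tau)\Leftrightarrow v\in S(\theta_k,\alpha,\tau)$). The only loose end, which the paper itself does not address either, is the upper restriction $r\le n$ in Proposition~\ref{key} (i.e.\ $\alpha\ge 1/n$); your phrase ``degenerate regimes'' glosses over this, but since the paper is equally silent here this is not a defect of your write-up relative to the paper.
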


We note that in view of the point-strip duality, bounding $\sum_{k=1}^N 1_{S(\theta_k, \frac{1}{r}, \frac{t}{r})}(\xi)$ yields estimates of the form stated in Proposition \ref{naszodi}.

The direct consideration of the characteristic functions in place of the Gaussian functions gives exactly the same bound as an application of Proposition \ref{key}.

In \cite{FNN}, Frankl, Nagy and Naszodi conjecture that for every collection of $N$ points on $\mathbb{S}^2$ there exists a strip of width $\frac{2}{N}$ containing at least $f(N)$ points, where $f(N)\rightarrow\infty$ as $N\rightarrow \infty$. %By considering projections and injections, it is clear that disproving this conjecture in arbitrary dimension would entail the two-dimensional version. Conversely, proving this conjecture in $\R^2$ suffices to show the same statement in any dimension. %It seems that the question is very difficult, and dimension plays very little role.
Proposition \ref{naszodi} generalizes Theorem 4.2 by Frankl, Nagy, Naszodi \cite{FNN} from the two-dimensional case to an arbitrary dimension, with good dimensional constant, although it does not shed any light on the dependence on $N$.


\begin{thebibliography}{9}

%\bibitem{AGMbook} S. Artstein-Avidan, A. Giannopoulos, V. D. Milman, \emph{Asymptotic Geometric Analysis, part I,} 2015.

\bibitem{AHZ} M. Alexander, M. Henk, A. Zvavitch, \emph{A discrete version of Koldobsky's slicing inequality,} Israel J. Math., to appear.

%\bibitem{BaiYin} Z. D. Bai,Y. Q. Yin, \emph{Limit of the smallest eigenvalue of a large-dimensional sample covariance matrix,} Ann. Probab. 21 (1993), 1275-1294.

%\bibitem{ball_cube1}  K. Ball, \emph{Cube slicing in $\R^n$}, Proc. Amer. Math. Soc. 97 (1986), 465-473.

%\bibitem{ball_cube2} K. Ball, \emph{Volumes of sections of cubes and related problems.} In: J. Lindenstrauss and V. D. Milman. Geometric aspects of functional analysis (1987-88). Lecture Notes in Math., Vol. 1376 (1989). Springer, Berlin, 251-260.

%\bibitem{Ball-book} K. Ball, \emph{An elementary introduction to modern convex geometry}, Flavors of Geometry, MSRI Publications Volume 31, (1997), 58.


\bibitem{KA} N. Alon, B. Klartag, \emph{Optimal compression of approximate inner products and dimension reduction}, Symposium on Foundations of Computer Science (FOCS 2017), 639-650.

\bibitem{beck} J. Beck, \emph{Irregularities of distribution,} I. Acta Math. 159 (1987), no. 1-2, 1-49.

\bibitem{BKK} S. Bobkov, B. Klartag, A. Koldobsly, \emph{Estimates for moments of general measures on convex bodies}, Proc. Amer. Math. Soc., 146 (2018), 4879-4888.

%\bibitem{bogachev} V. I. Bogachev, \emph{Gaussian Measures}, Math. Surveys Monogr. 62, Amer. Math. Soc. (1998).

%\bibitem{BF} T. Bonnesen, W. Fenchel \emph{Theory of convex bodies}, BCS Assosiates, Moscow, Idaho, (1987), 173.

%\bibitem{B} C. Borell, \emph{The Brunn-Minkowski inequality in Gauss spaces,} Invent. Math {\bf 30} (1975), 207-216.

\bibitem{Bou1} J. Bourgain, \emph{On high-dimensional maximal functions associated to convex bodies}, Amer. J. Math., 108, (1986), 1467-1476.

\bibitem{Bou2} J. Bourgain, \emph{Geometry of Banach spaces and harmonic analysis}, Proceedings of the International Congress of Mathematicians, Vol. 1, 2 (Berkeley, Calif., 1986), Amer. Math. Soc., Providence, RI, (1987), 871-878.

\bibitem{Bou3} J. Bourgain, \emph{On the distribution of polynomials on high-dimensional convex sets,} Geom. aspects of Funct. Anal. (GAFA seminar notes), Israel Seminar, Springer Lect. Notes in Math. 1469 (1991), 127-137.

%\bibitem{BBLL} H. J. Brascamp, E. H. Lieb, and J. M. Luttinger, \emph{A general rear- rangement inequality for multiple integrals,} J. Functional Analysis 17 (1974), 227-237.

%\bibitem{edelman} A. Edelman, \emph{Eigenvalues and condition numbers of random matrices}, SIAM J. Matrix Anal. Appl. 9 (1988), 543-560.

\bibitem{FNN} N. Frankl, A. Nagy, M. Naszodi, \emph{Coverings: variations on a result of Rogers and on the Epsilon-net theorem of Haussler and Welzl}, Discrete Mathematics, Volume 341, Issue 3, March 2018, Pages 863-874.

\bibitem{KhatSid} A. Giannopoulos, \emph{On some vector balancing problems}, Studia Math. 122 (1997), 225-234.

%\bibitem{gordon} Y. Gordon, \emph{Some inequalities for Gaussian processes and applications, Israel J. Math. 50 (1985), 265-289.}

%\bibitem{vHLY} R. Van Handel, R. Latala, P. Youseff, \emph{The dimension-free structure of nonhomogeneous random matrices}, preprint 2018.

%\bibitem{kanter} M. Kanter, \emph{Unimodality and dominance for symmetric random vectors,} Trans. Amer. Math. Soc. 229 (1977), 65-85

%\bibitem{gian}  S. Brazitikos, A. Giannopoulos, P. Valettas, B. Vritsiou, \emph{Geometry of Isotropic Convex Bodies}, Mathematical Surveys and Monographs (2014) 594 pp, hardcover, Volume: 196.

%\bibitem{Kold} A. Koldobsky, \emph{Fourier Analysis in Convex Geometry}, Math. Surveys and Monographs, AMS, Providence RI, 2005.

%\bibitem{KolMil} A. V. Kolesnikov, E. Milman, \emph{Sharp Poincar\'e-type inequality for the Gaussian measure on the boundary of convex sets}, to appear in GAFA Seminar Notes.

%\bibitem{KolMil-1} A. V. Kolesnikov, E. Milman, \emph{Riemannian metrics on convex sets with applications to Poincar\'e and log-Sobolev inequalities}, preprint.

%\bibitem{KolMilsupernew} A. V. Kolesnikov, E. Milman, \emph{Local $L_p$-Brunn-Minkowski inequalities for p<1}, preprint.

\bibitem{Kl_slicing} B. Klartag, \emph{On convex perturbations with a bounded isotropic constant,} Geom. Funct. Anal. (GAFA) 16 (2006), 1274-1290.

%\bibitem{Kl} B. Klartag, \emph{A central limit theorem for convex sets},  Invent. Math., Vol. 168, (2007), 91--131.

%\bibitem{Kl2} B. Klartag, \emph{Power-law estimates for the central limit theorem for convex sets},  J. Funct. Anal., Vol. 245, (2007), 284--310.

\bibitem{KlKol} B. Klartag, A. Koldobsky, \emph{An example related to the slicing inequality for general measures}, Journal of Functional Analysis, Volume 274, Issue 7, 1 April 2018, Pages 2089-2112.

%\bibitem{KM} B. Klartag, V.D. Milman, \emph{Geometry of log-concave functions and measures}, Geom. Dedicata 112 (2005) 169-182.

%\bibitem{Kol1} A. Koldobsky, \emph{Intersection bodies, positive definite distributions and the Busemann-Petty problem,} Amer. J. Math. 120 (1998), 827-840.

%\bibitem{Kold} A. Koldobsky, \emph{Fourier analysis in convex geometry}, Amer. Math. Soc., Providence RI, 2005.

\bibitem{Kol2} A. Koldobsky, \emph{A hyperplane inequality for measures of convex bodies in $\R^n, n\leq 4$}, Discrete Comput. Geom. 47 (2012), 538-547.

\bibitem{Kol3} A. Koldobsky, \emph{A $\sqrt{n}$ estimate for measures of hyperplane sections of convex bodies}, Adv. Math. 254 (2014), 33-40.

\bibitem{Kol4} A. Koldobsky, \emph{Slicing inequalities for measures of convex bodies,} Adv. Math. 283 (2015), 473-488.

\bibitem{KolPaj} A. Koldobsky, A.Pajor, \emph{A remark on measures of sections of $L_p$-balls,} Geom. Aspects of Funct. Anal. (GAFA seminar notes), Israel Seminar, Springer Lect. Notes in Math. 2169 (2017), 213-220.

%\bibitem{MenKol} V. Koltchinskii, S. Mendelson, \emph{Bounding the smallest singular value of a random matrix without concentration}, International Mathematics Research Notices, Vol. 2015 (23), 12991-13008, 2015.

%\bibitem{KolPa} A. Koldobsky and A. Pajor, \emph{A remark on measures of sections of $L_p$-balls}, Geom. aspects of Funct. Anal. (GAFA seminar notes), Israel Seminar, Springer Lect. Notes in Math. 2169 (2017), 213-220.

%\bibitem{KolZv} A. Koldobsky, A. Zvavitch, \emph{An isomorphic version of the Busemann-Petty problem for arbitrary measures}, Geom. Dedicata 174 (2015), 261-277.

%\bibitem{MS} V. D. Milman, G. Schechtman, \emph{Asymptotic Theory of finite-dimensional normed spaces}, Lecture notes in Math. 163, Springer, Berlin, 1980.

%\bibitem{latala} R. Latala, \emph{Some estimates of norms of random matrices}, Proc. Amer. Math. Soc. 133 (2005), 1273-1282.

%\bibitem{LitOf} J. E. Littlewood, A. C. Offord, \emph{On the number of real roots of a random algebraic equation. III,} Rec. Math. [Mat. Sbornik] N.S. 12 (54), (1943), 277-286

%\bibitem{LPRT} A. Litvak, A. Pajor, M. Rudelson, N. Tomczak-Jaegermann, \emph{smallest singular value of random matrices and geometry of random polytopes,} J. Reine Angew. Math. 589 (2005), 1-19.

%\bibitem{LPP} G. Livshyts, G. Paouris, P. Pivovarov, \emph{Sharp bounds for marginal densities of product measures}, Israel Journal of Math, 2016.

%\bibitem{LPP2} G. Livshyts, G. Paouris, P. Pivovarov, \emph{Small deviations for operator norms}, work in progress

%\bibitem{Neuman} J. von Neumann, H. H. Goldstine, \emph{Numerical inverting of matrices of high order}, Bull. Amer. Math. Soc. 53 (1947), 1021-1099.

%\bibitem{MenPao} S. Mendelson, G. Paouris, \emph{On the singular values of random matrices,} Journal of the European Mathematics Society, 16, 823-834, 2014.

%\bibitem{PPsurvey} G. Paouris, P. Pivovarov, \emph{Randomized isoperimetric inequalities,} 2016.


%\bibitem{KLsingval} G. V. Livshyts, \emph{Small ball probability for the smallest singular value of heavy-tailed matrices with arbitrary aspect ratio}, in preparation.

\bibitem{LytTikh} A. Lytova, K. Tikhomirov, \emph{On delocalization of eigenvectors of random non-Hermitian matrices}, preprint.

\bibitem{RagTho} P. Raghavan, C. D. Thompson, \emph{Randomized rounding: a technique for provably good algorithms and algorithmic proofs,} Combinatorica 7 (1987), no. 4, 365-374.

\bibitem{RebTikh} E. Rebrova, K. Tikhomirov, \emph{Coverings of random ellipsoids, and invertibility of matrices with i.i.d. heavy-tailed entries}, Israel Journal of Math, to appear.

\bibitem{regev} O. Regev, \emph{A note on Koldobsky's lattice slicing inequality}, preprint https://arxiv.org/abs/1608.04945.

%\bibitem{RSmink} O. Regev, N. Stephens-Davidowitz, \emph{Reverse Minkowski theorem}, https://arxiv.org/pdf/1611.05979.pdf.

%\bibitem{rogers} C. A. Rogers, \emph{A single integral inequality,} J. London Math. Soc. 32 (1957), 102-108.

%\bibitem{Rud-square} M. Rudelson, \emph{Invertibility of random matrices: norm of the inverse}, Annals of Mathematics 168 (2008), 575-600.

%\bibitem{Rud-notes} M. Rudelson, \emph{Delocalization of eigenvectors of random matrices Lecture notes,} submitted.

%\bibitem{RudVer-square} M. Rudelson, R. Vershynin, \emph{The Littlewood-Offord problem and invertibility of random matrices}, Adv. Math.  218  (2008),  no. 2, 600-633.

%\bibitem{RudVer-general} M. Rudelson, R. Vershynin, \emph{Smallest singular value of a random rectangular matrix,} Communications on Pure and Applied Mathematics 62 (2009), 1707-1739.

%\bibitem{RudVer-icm} M. Rudelson, R. Vershynin, \emph{Non-asymptotic theory of random matrices: extreme singular values,} Proceedings of the International Congress of Mathematicians, 2010, pp. 83-120.

%\bibitem{RudVer-smallball} M. Rudelson, R. Vershynin, \emph{Small ball probabilities for linear images of high dimensional distributions,} Int. Math. Res. Not. 19 (2015), 9594-9617.

%\bibitem{szarek} S. Szarek, \emph{Condition numbers of random matrices, J. Complexity 7 (1991),} 131-149.

%\bibitem{tatarko} K. Tatarko, submitted.

%\bibitem{taovu} T. Tao, V. Vu, \emph{On the singularity probability of random Bernoulli matrices}, J. Amer. Math. Soc. 20 (2007), 603-628.

%\bibitem{Tikh} K. Tikhomirov, \emph{The limit of the smallest singular value of random matrices with i.i.d. entries,} Adv. Math. 284 (2015), 1-20.

%\bibitem{Tikhomirov2} K. Tikhomirov The smallest singular value of random rectangular matrices with no moment assumptions on entries, Israel J. Math. 212 (2016), no. 1, 289-314.

%\bibitem{book4} R. Schneider, \emph{Convex bodies: the Brunn-Minkowski theory, second expanded edition}, Encyclopedia of Mathematics and its Applications, Cambridge University Press, Cambridge, 2013.

\bibitem{Versh} R. Vershynin, \emph{High-dimensional probability: an introduction with applications in data science}, Cambridge University Press, 2018.

\end{thebibliography}
\end{document}